\newtheorem{theorem}{Theorem}[section] 
\newtheorem{lemma}[theorem]{Lemma}
\newtheorem{corollary}[theorem]{Corollary}
\newtheorem{case}{Case}
\newtheorem{subcase}{Subcase}[case]
\newtheorem{claim}{Claim}
\theoremstyle{definition}
\newtheorem{definition}{Definition}
\newtheorem{proposition}{Proposition}
\def\th@plain{%
  \upshape 
}
\newcommand{\etal}{et~al.\ }
\newcommand{\ie}{i.e.,\ }
\renewenvironment{proof}[1][\proofname]{\par
  \pushQED{\qed}%
  \normalfont \topsep6\p@\@plus6\p@\relax
  \trivlist
  \item[\hskip\labelsep
        \bfseries
    #1\@addpunct{.}]\ignorespaces
}{%
  \popQED\endtrivlist\@endpefalse
}
\crefname{claim}{Claim}{Claims}
\begin{document}

\title{Planar graphs without 4-, 7-, 9-cycles and 5-cycles normally adjacent to 3-cycles}
\author{Zhengjiao Liu\footnote{School of Mathematics and Statistics, Henan University, Kaifeng, 475004, P. R. China} \and Tao Wang\footnote{Center for Applied Mathematics, Henan University, Kaifeng, 475004, P. R. China. {\tt wangtao@henu.edu.cn; https://orcid.org/0000-0001-9732-1617} } \and Xiaojing Yang\footnote{School of Mathematics and Statistics, Henan University, Kaifeng, 475004, P. R. China. {\tt Corresponding
author: yangxiaojing@henu.edu.cn}}}
\date{July 13, 2024}
\maketitle
	
\begin{abstract}
A graph is \emph{$(\mathcal{I}, \mathcal{F})$-partitionable} if its vertex set can be partitioned into two parts such that one part $\mathcal{I}$ is an independent set, and the other $\mathcal{F}$ induces a forest. A graph is \emph{$k$-degenerate} if every subgraph $H$ contains a vertex of degree at most $k$ in $H$. Bernshteyn and Lee defined a generalization of $k$-degenerate graphs, which is called \emph{weakly $k$-degenerate}. In this paper, we show that planar graphs without $4$-, $7$-, $9$-cycles, and $5$-cycles normally adjacent to $3$-cycles are both $(\mathcal{I}, \mathcal{F})$-partitionable and weakly $2$-degenerate.

\medskip\textbf{Keywords}: Planar graph; $(\mathcal{I}, \mathcal{F})$-partition; Weak degeneracy; Transversal
\end{abstract}

\section{Introduction}
Graphs considered in this paper are finite and simple. A graph is \emph{$k$-degenerate} if every subgraph $H$ contains a vertex of degree at most $k$ in $H$. A graph $G$ is \emph{$(a, b)$-partitionable} if its vertices can be partitioned into two subsets, with one subset inducing an $a$-degenerate subgraph and the other inducing a $b$-degenerate subgraph of $G$. It is worth noting that planar graphs are $5$-degenerate. Thomassen \cite{MR1358992, MR1866722} established that such graphs are $(1, 2)$-partitionable and $(0,3)$-partitionable. 

A graph is \emph{$(\mathcal{I}, \mathcal{F})$-partitionable} if its vertices can be divided into two parts, one part $\mathcal{I}$ forming an independent set, and the other part $\mathcal{F}$ inducing a forest. Note that an independent set is characterized by being $0$-degenerate, while a forest is $1$-degenerate. It is evident that a graph is $(\mathcal{I}, \mathcal{F})$-partitionable if and only if it is $(0, 1)$-partitionable. Borodin and Glebov \cite{MR1918259} established that every planar graph of girth at least $5$ is $(\mathcal{I}, \mathcal{F})$-partitionable. An \emph{$l$-cycle} is a cycle of length $l$. Liu and Yu \cite{MR4115511} confirmed that planar graphs without $4$-, $6$- and $8$-cycles are $(\mathcal{I}, \mathcal{F})$-partitionable. Recently, Kang \etal \cite{arXiv:2303.04648} proved that every planar graph without $4$-, $6$- and $9$-cycles is $(\mathcal{I}, \mathcal{F})$-partitionable. Two cycles are \emph{normally adjacent} if their intersection is isomorphic to the complete graph $K_{2}$. In a plane graph, two faces are \emph{adjacent} if their boundaries share at least one edge, while two faces are \emph{normally adjacent} if their bounded cycles are normally adjacent. In this paper, we prove that planar graphs without $4$-, $7$-, $9$-cycles, and $5$-cycles normally adjacent to $3$-cycles are also $(\mathcal{I}, \mathcal{F})$-partitionable.

\begin{theorem}\label{Liu:1}
Planar graphs without $4$-, $7$-, $9$-cycles, and $5$-cycles normally adjacent to $3$-cycles are $(\mathcal{I}, \mathcal{F})$-partitionable.
\end{theorem}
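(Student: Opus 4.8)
The plan is to argue by contradiction through a discharging analysis applied to a minimal counterexample. Let $G$ be a plane graph satisfying the hypotheses that fails to be $(\mathcal{I},\mathcal{F})$-partitionable, chosen with $|V(G)|$ as small as possible; standard arguments let me assume $G$ is connected and $2$-connected, so that every face is bounded by a cycle. Because $G$ has no $4$-cycle, no face has length $4$; because $G$ has no $7$- or $9$-cycle, no face has length $7$ or $9$; and the hypothesis forbids a $5$-face from being normally adjacent to a $3$-face. I assign to each vertex and face the initial charge $\mu(x) = d(x) - 4$, so that Euler's formula gives $\sum_{x \in V(G) \cup F(G)} \mu(x) = -8$. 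The only elements carrying negative charge are $3$-faces and low-degree vertices, so the whole argument reduces to showing that, after a suitable redistribution of charge, every element ends nonnegative, contradicting the total $-8$.

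First I would establish the reducible configurations that a minimal counterexample cannot contain. The cleanest is that $G$ has minimum degree at least $3$: if $v$ has degree $2$ with neighbors $u_1,u_2$, I take an $(\mathcal{I},\mathcal{F})$-partition of $G-v$ guaranteed by minimality and extend it, placing $v$ in $\mathcal{F}$ whenever at least one $u_i$ lies in $\mathcal{I}$ (so $v$ gains at most one neighbor in $\mathcal{F}$ and creates no cycle), and placing $v$ in $\mathcal{I}$ when both $u_i$ lie in $\mathcal{F}$. The remaining configurations concern $3$-vertices, triangles, and their neighborhoods, and are handled by the same template: delete or locally modify a bounded piece, invoke minimality, and extend by a case analysis on whether each boundary neighbor lies in $\mathcal{I}$ or $\mathcal{F}$. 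The delicate point, compared with ordinary proper coloring, is that adding a vertex to $\mathcal{F}$ is forbidden only when its forest-neighbors already lie in a common tree, so each extension must track the acyclicity of $\mathcal{F}$, occasionally recoloring along a path to break a would-be cycle.

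With the structural restrictions in hand, I would design discharging rules that move charge from the positive elements (faces of length at least $5$ and vertices of degree at least $5$) to the negative ones ($3$-faces and $3$-vertices). Typical rules send a fixed amount from each large face across an incident edge to an adjacent $3$-face, and from high-degree vertices to incident triangles and to nearby low-degree vertices; the exact fractions are chosen so that a $3$-face, starting at charge $-1$, is compensated by its surrounding faces and vertices. Here the forbidden lengths and the normal-adjacency hypothesis do the real work: forbidding $4$-, $7$-, and $9$-faces restricts which face lengths can cluster around a triangle, while the ban on a $5$-face normally adjacent to a $3$-face guarantees that a $5$-face (which carries only the small surplus $+1$) is never forced to feed a triangle across a shared edge, so it can finish nonnegative.

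The main obstacle is the discharging verification for the faces of small surplus, especially the $5$- and $6$-faces, together with the structural lemmas that support them. I expect the bulk of the work to lie in a careful census of the ways a $5$-face can meet triangles (sharing a single vertex, or adjacent but not normally adjacent) and in showing that in every such local picture the reducible configurations already exclude the worst cases, so that the charge a $5$-face must release stays within its budget. Balancing these demands against those of the $6$- and $8$-faces, and confirming that the reducibility extensions remain valid under the forest constraint, is where the argument is most likely to need the forbidden-cycle hypotheses in full force.
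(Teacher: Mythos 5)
Your proposal is a strategy outline rather than a proof, and the two places where all of the actual content of this theorem lives are both left as gestures. First, the reducible configurations. Beyond the degree-$2$ reduction (which you do correctly), you say the remaining configurations ``concern $3$-vertices, triangles, and their neighborhoods'' and are handled by a uniform delete-and-extend template. But no small neighborhood of a $3$-vertex or a triangle is reducible here: the unavoidable set the paper needs consists of three rather large configurations --- a $10$-face incident with ten $3$-vertices and normally adjacent to a $3$-face, an $8$-face normally adjacent to a $5$-face with eleven $3$-vertices, and a pair of adjacent $10$-faces with eighteen $3$-vertices and two $4$-vertices --- and proving these reducible for the $(\mathcal{I},\mathcal{F})$-partition is not a routine case analysis; the paper does it by passing to strictly $f$-degenerate transversals of a canonical cover with $f(v,1)=1$, $f(v,2)=2$ and invoking Gallai-type and extension theorems (\cref{Gallai-SFDT}, \cref{WW}), precisely because ad hoc ``recolor along a path to break a cycle'' arguments do not close the forest case. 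Until you name your configurations and verify their reducibility, there is no theorem.

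Second, the discharging. Your normalization $\mu(x)=d(x)-4$ puts a deficit of $1$ on every $3$-vertex, and in this class almost all vertices may be $3$-vertices while the faces feeding them may be $5$-faces. A $5$-face incident with five $3$-vertices has surplus $1$ but would owe each such vertex $\tfrac{1}{3}$ (each $3$-vertex must collect its unit from three incident faces), a total of $\tfrac{5}{3}$; no local rule can balance this, so the light $5$-face is a genuine obstruction to your charging, not a ``delicate case.'' The paper instead uses $\mu(v)=2d(v)-6$, $\mu(f)=d(f)-6$, which makes $3$-vertices neutral and turns $3$- and $5$-faces into the only deficit elements; even then it needs a two-stage rule system (rules R1--R5, with ``good faces'' redistributing leftover charge to ``bad'' $10$-faces in a second round) and the full strength of the structural lemmas to verify nonnegativity. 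You would need either to switch normalizations or to produce a substantially different rule set and verification; as written, the plan fails at exactly the faces you flag as ``the main obstacle.''
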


\begin{corollary}\label{Liu:2}
Every planar graph without $4$-, $6$-, $7$-, and $9$-cycles is $(\mathcal{I}, \mathcal{F})$-partitionable.
\end{corollary}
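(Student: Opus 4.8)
The plan is to derive \Cref{Liu:2} directly from \Cref{Liu:1} by showing that the hypotheses of the corollary are strictly stronger than those of the theorem. Concretely, I would argue that a planar graph $G$ with no $4$-, $6$-, $7$-, or $9$-cycle satisfies every condition imposed in \Cref{Liu:1}: it trivially contains no $4$-, $7$-, or $9$-cycle, so the only thing left to verify is that $G$ contains no $5$-cycle normally adjacent to a $3$-cycle.

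The key step is a short structural observation: if a $5$-cycle and a $3$-cycle are normally adjacent, then $G$ must contain a $6$-cycle. To see this, suppose a $5$-cycle $C_5$ and a $3$-cycle $C_3$ are normally adjacent, so that $C_5 \cap C_3 \cong K_2$; write this shared edge as $uv$. Deleting $uv$ from $C_5$ leaves a $u$--$v$ path of length $4$, and deleting $uv$ from $C_3$ leaves a $u$--$v$ path of length $2$. Because normal adjacency forces the intersection to be exactly the edge $uv$ together with its two endpoints, these two paths are internally disjoint, so their union is a cycle of length $4 + 2 = 6$.

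Combining the two steps finishes the argument: the absence of $6$-cycles in $G$ rules out any $5$-cycle normally adjacent to a $3$-cycle, so $G$ meets all the hypotheses of \Cref{Liu:1} and is therefore $(\mathcal{I}, \mathcal{F})$-partitionable. I do not expect a genuine obstacle here, since the corollary is essentially a specialization of the theorem; the only point demanding any care is confirming that the length computation yields a \emph{simple} $6$-cycle, which is precisely where the ``normally adjacent'' hypothesis (intersection isomorphic to $K_2$ rather than merely sharing an edge) is needed, as it guarantees the internal disjointness of the two paths.
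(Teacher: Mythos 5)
Your proof is correct and follows exactly the intended derivation: the paper states \cref{Liu:2} as an immediate consequence of \cref{Liu:1}, the implicit point being precisely your observation that a $5$-cycle normally adjacent to a $3$-cycle yields a $6$-cycle (the two internally disjoint $u$--$v$ paths of lengths $4$ and $2$), so the absence of $6$-cycles subsumes that hypothesis. Nothing further is needed.
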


To establish \cref{Liu:1}, we rely on the following local structural result. 
\begin{theorem}\label{Local}
If $G$ is a plane graph without $4$-, $7$-, $9$-cycles, and $5$-cycles normally adjacent to $3$-cycles, then one of the following statements holds:
\begin{enumerate}[label = (\alph*)]
\item $\delta(G) \leq 2$;
\item there exists a subgraph isomorphic to a configuration in \cref{PP}. 
\end{enumerate}
\end{theorem}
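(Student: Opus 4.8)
The plan is to argue by contradiction using Euler's formula and the discharging method. Suppose the conclusion fails for some plane graph $G$, so that $\delta(G) \ge 3$ and $G$ contains no subgraph isomorphic to any configuration in \cref{PP}; I will derive a contradiction. After the usual reductions I may assume $G$ is $2$-connected, so that every face is bounded by a cycle and the forbidden-cycle hypotheses translate directly into forbidden face lengths and forbidden face adjacencies.

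First I would record the structural consequences of the three forbidden cycle lengths together with the normal-adjacency restriction. Since $G$ has no $4$-cycle there is no $4$-face, and no two $3$-faces share an edge. The crucial observation is that two faces bounded by cycles of lengths $p$ and $q$ that are normally adjacent produce a cycle of length $p+q-2$; combining this with the ban on $4$-, $7$-, $9$-cycles and on a $5$-cycle normally adjacent to a $3$-cycle shows that a $3$-face cannot be normally adjacent to any face of length $3,5,6,$ or $8$, so \emph{every $3$-face is surrounded only by faces of length at least $10$}. The same arithmetic rules out a $5$-face being adjacent to a $3$- or $6$-face, and a $6$-face being adjacent to a $3$- or $5$-face. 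These adjacency restrictions, refined by the hypothesis that none of the configurations in \cref{PP} occurs, bound how densely small faces and degree-$3$ vertices can cluster; in particular they control the local environment of each degree-$3$ vertex and of each small face.

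Next I would set up the discharging. Assign to every vertex $v$ the charge $\mu(v)=d(v)-4$ and to every face $f$ the charge $\mu(f)=d(f)-4$; by Euler's formula the total charge is $\sum_{v}\mu(v)+\sum_{f}\mu(f)=-8<0$. The only elements with negative initial charge are the degree-$3$ vertices and the $3$-faces (each with charge $-1$), while faces of length at least $5$ and vertices of degree at least $5$ carry the surplus. The large faces of length at least $10$ are the reservoirs: I would design rules that move charge from each large face across its incident edges to the adjacent $3$-faces and to the incident low-degree vertices, with auxiliary rules letting $5$- and $6$-faces pass on whatever surplus they have to needy neighbors. The redistribution is chosen so that, using the structural lemmas above, no reservoir is ever drained below $0$ and every deficient object is raised to exactly $0$.

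The main obstacle is the final verification that every vertex and face ends with nonnegative charge. The delicate cases are the $5$-faces and $6$-faces, whose surplus ($+1$ and $+2$) is small, and the degree-$3$ vertices, which must collect a full unit from their three incident faces; here one must exploit both the normal-adjacency ban (to guarantee that a small face is never forced to feed a $3$-face directly) and the absence of the configurations in \cref{PP} (to guarantee that no face or vertex is asked to give away more than it holds). Once every final charge is shown to be nonnegative, summing them contradicts the total of $-8$, completing the proof.
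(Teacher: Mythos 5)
Your proposal correctly identifies the method the paper uses (discharging against Euler's formula) and several of the right structural facts (no $4$-faces, every $3$-face surrounded by $10^{+}$-faces, the $p+q-2$ arithmetic for normally adjacent faces). But as written it has a genuine gap: the entire content of the proof is the design of the rules and the verification of nonnegativity, and you supply neither. Saying ``I would design rules \dots chosen so that no reservoir is drained below $0$'' is precisely the claim that has to be proved, and it is not routine here. The tight case is a $10$-face bounded by a $10$-cycle with ten incident $3$-vertices and several incident $3$-faces: in any reasonable scheme such a face must send a full unit across each edge shared with a $3$-face region (half to each endpoint, or the equivalent), and its surplus is not enough. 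The paper needs a second round of discharging (rule R5, in which ``good faces'' forward their leftover charge to adjacent ``bad faces''), a dedicated lemma (\cref{lem:rule}) bounding that forwarded amount, and the exclusion of the two-adjacent-$10$-faces configuration to make that lemma true. None of this is visible in your plan, and without it the verification fails.

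Two further concrete problems. First, your reduction ``I may assume $G$ is $2$-connected'' is not valid for a statement of this type: passing to a block changes vertex degrees at cut vertices, so a configuration found in a block (which prescribes exact degrees $3$ and $4$) need not exist in $G$. The paper instead keeps $G$ as is and explicitly handles faces whose boundary walk is not a cycle (e.g.\ an $8$-face bounded by two $3$-cycles and a cut edge, a $9$-face bounded by three $3$-cycles). Second, your charge $\mu(x)=d(x)-4$ gives $4$-vertices zero surplus. The whole point of excluding the configurations in \cref{PP} is that certain vertices adjacent to the troublesome faces are forced to be $4^{+}$-vertices, and the paper's proof (with $\mu(v)=2d(v)-6$, so a $4$-vertex holds $+2$) uses exactly those vertices as donors (rules R3a--R3c). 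Under your normalization the upgrade from degree $3$ to degree $4$ buys nothing on the vertex side, so the hypothesis that the configurations are absent cannot enter the charge count the way you need it to; a light $5$-face adjacent to an all-degree-$3$ $8$-face already overdraws the $8$-face in your scheme unless you rebuild the donor structure from scratch.
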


Indeed, \cref{Local} serves not only as a crucial tool in establishing \cref{Liu:1}, but also finds application in proving a result on weakly $2$-degenerate.

\begin{figure}
\centering
\subcaptionbox{A 10-face normally adjacent to a 3-face.\label{10Cap3Fig}}[0.45\linewidth]
{\begin{tikzpicture}
\def\s{0.707}
\foreach \ang in {1, 2, 3, 4, 5, 6, 7, 8, 9, 10}
{
\def\pointname{v\ang}
\coordinate (\pointname) at ($(\ang*360/10+54:\s)$);
}
\draw (v1)--(v2)--(v3)--(v4)--(v5)--(v6)--(v7)--(v8)--(v9)--(v10)--cycle;
\coordinate (S) at ($(v4)!1!60:(v3)$);
\draw (v3)--(S)--(v4);
\node[circle, inner sep = 1, fill, draw] () at (S) {};
\foreach \ang in {1, 2, 3, 4, 5, 6, 7, 8, 9, 10}
{
\node[circle, inner sep = 1, fill, draw] () at (v\ang) {};
}
\end{tikzpicture}}
\subcaptionbox{Two adjacent $10$-faces.\label{BAD10FACE}}[0.45\linewidth]
{\begin{tikzpicture}
\def\s{0.707}
\foreach \ang in {1, 2, 3, 4, 5, 6, 7, 8, 9, 10}
{
\def\pointname{v\ang}
\coordinate (\pointname) at ($(\ang*360/10+54:\s)$);
}
\draw (v1)--(v2)--(v3)--(v4)--(v5)--(v6)--(v7)--(v8)--(v9)--(v10)--cycle;
\foreach \ang in {1, 2, 3, 4, 5, 6, 7, 8, 9, 10}
{
\node[circle, inner sep = 1, fill, draw] () at (v\ang) {};
}

\foreach \ang in {1, 2, 3, 4, 5, 6, 7, 8, 9, 10}
{
\def\pointname{u\ang}
\coordinate (\pointname) at ($(\ang*360/10+54:\s) + (1.9*\s, 0)$);
}
\draw (u1)--(u2)--(u3)--(u4)--(u5)--(u6)--(u7)--(u8)--(u9)--(u10)--cycle;
\foreach \ang in {1, 3, 4, 6, 7, 8, 9, 10}
{
\node[circle, inner sep = 1, fill, draw] () at (u\ang) {};
}
\draw (v10)--(u2);
\draw (v7)--(u5);
\node[rectangle, inner sep = 2, fill, draw] () at (u2) {};
\node[rectangle, inner sep = 2, fill, draw] () at (u5) {};
\end{tikzpicture}}\vspace{0.5cm}

\subcaptionbox{An 8-face normally adjacent to a 5-face.\label{8Cap5Fig}}[0.45\linewidth]
{\begin{tikzpicture}
\def\s{0.707}
\foreach \ang in {1, 2, ..., 8}
{
\def\pointname{v\ang}
\coordinate (\pointname) at ($(\ang*360/8+22.5:\s)$);
}
\draw (v1)--(v2)--(v3)--(v4)--(v5)--(v6)--(v7)--(v8)--cycle;
\coordinate (w5) at ($(v4)!1!90:(v3)$);
\coordinate (w1) at ($(w5)!1!150:(v4)$);
\coordinate (w2) at ($(v3)!1!-90:(v4)$);
\draw (v4)--(w5)--(w1)--(w2)--(v3);
\node[circle, inner sep = 1, fill, draw] () at (w5) {};
\node[circle, inner sep = 1, fill, draw] () at (w1) {};
\node[circle, inner sep = 1, fill, draw] () at (w2) {};
\foreach \ang in {1, 2, ..., 8}
{
\node[circle, inner sep = 1, fill, draw] () at (v\ang) {};
}
\end{tikzpicture}}
\caption{Configurations, where a solid point represents a $3$-vertex, and a rectangle represents a $4$-vertex.}
\label{PP}
\end{figure}

Next, we introduce another graph property: weakly $f$-degenerate. 
\begin{definition}[\textsf{Delete} operation]
Let $G$ be a graph and $f : V(G) \longrightarrow \mathbb{Z}$ be a function. For a vertex $u \in V(G)$, the operation $\mathsf{Delete}_{[u]}(G, f)$ yields the graph $G' = G - u$ and the function $f': V(G') \longrightarrow \mathbb{Z}$ defined as 
    \begin{align*}
		f'(v) \coloneqq
		&\begin{cases}
			f(v) - 1, & \text{if $uv \in E(G)$};\\[0.3cm]
			f(v), & \text{otherwise}.
		\end{cases}
	\end{align*}
An application of the \textsf{Delete} operation is \emph{legal} if the resulting function $f'$ is nonnegative. 
\end{definition}

\begin{definition}[\textsf{DeleteSave} operation]
Let $G$ be a graph and $f : V(G) \longrightarrow \mathbb{Z}$ be a function. Given a pair of adjacent vertices $u, w \in V(G)$, the operation  $\mathsf{DeleteSave}_{[u; w]}(G, f)$ produces the graph $G' = G - u$ and the function $f' : V(G') \longrightarrow \mathbb{Z}$ defined as
	\begin{align*}
		f'(v) \coloneqq
		&\begin{cases}
			f(v) - 1, & \text{if $v \in N(u) - \{w\}$};\\[0.3cm]
			f(v), & \text{otherwise}.
		\end{cases}
	\end{align*}
An application of the \textsf{DeleteSave} operation is \emph{legal} if $f(u) > f(w)$ and the resulting function $f'$ is nonnegative. 
\end{definition}

A graph $G$ is \emph{weakly $f$-degenerate} if it is possible to remove all vertices from $G$ by a sequence of legal applications of the Delete or DeleteSave operations. A graph is \emph{$f$-degenerate} if it is weakly $f$-degenerate with only \textsf{Delete} operations involved. Clearly, for any $d \in \mathbb{N}$, a graph is \emph{$d$-degenerate} if and only if it is $f$-degenerate with respect to the constant function of value $d$. The \emph{degeneracy}  $\mathsf{d}(G)$ of $G$ is the least $d$ such that $G$ is $d$-degenerate, and the \emph{weak degeneracy}  $\mathsf{wd}(G)$ of $G$ is the minimum integer $d$ such that $G$ is weakly $d$-degenerate. 

Bernshteyn and Lee \cite{MR4606413} established the following relations among some graph coloring parameters.
\begin{proposition}\label{prop}
For any graph $G$, we always have 
\[
\chi(G) \leq \chi_{\ell}(G) \leq \chi_{\textsf{DP}}(G) \leq \chi_{\textsf{DPP}}(G) \leq \textsf{wd}(G) + 1 \leq \textsf{d}(G) + 1, 
\]
where $\chi_{\textsf{DP}}(G)$ is the DP-chromatic number of $G$, and $\chi_{\textsf{DPP}}(G)$ is the DP-paint number of $G$. 
\end{proposition}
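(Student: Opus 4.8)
The plan is to verify the chain one link at a time. Reading left to right, the first two inequalities say that each successive coloring notion \emph{generalizes} the previous one, the middle inequality says the online (game) variant is at least as hard for the Painter as the offline variant, the last inequality is immediate from the definitions, and the only link with genuine content is $\chi_{\textsf{DPP}}(G) \le \textsf{wd}(G) + 1$, where essentially all the work lies. I would isolate that as a separate lemma and dispatch the remaining four links quickly.

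For $\chi(G) \le \chi_{\ell}(G)$ I would observe that an ordinary $k$-coloring is exactly an $L$-coloring for the constant list assignment $L(v) = \{1, \dots, k\}$, so $k$-choosability implies $k$-colorability. For $\chi_{\ell}(G) \le \chi_{\textsf{DP}}(G)$, given any list assignment $L$, I would build the correspondence cover in which $c \in L(u)$ is matched to $c' \in L(v)$ along an edge $uv$ precisely when $c = c'$; this is a legal correspondence, and a coloring of this cover is exactly a proper $L$-coloring, so DP-$k$-colorability forces $k$-choosability. For $\chi_{\textsf{DP}}(G) \le \chi_{\textsf{DPP}}(G)$, I would let the Lister reveal a fixed cover gradually to a Painter holding a winning online strategy; the Painter's responses then assemble into a valid coloring of that cover, so DP-$k$-paintability implies DP-$k$-colorability. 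Finally $\textsf{wd}(G) \le \textsf{d}(G)$ is built into the definitions: a degeneracy ordering, in which each vertex has at most $\textsf{d}(G)$ earlier neighbors, run as a sequence of $\mathsf{Delete}$ operations against the constant function $\textsf{d}(G)$, is legal, since a vertex can absorb at most $\textsf{d}(G)$ neighbor-deletions (its value runs through $\textsf{d}(G), \textsf{d}(G)-1, \dots, 1, 0$) before legality would fail; hence $G$ is weakly $\textsf{d}(G)$-degenerate.

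The heart of the proposition is the lemma that if $G$ is weakly $f$-degenerate then $G$ is DP-$(f+1)$-paintable, after which taking $f \equiv \textsf{wd}(G)$ yields $\chi_{\textsf{DPP}}(G) \le \textsf{wd}(G) + 1$. I would prove this by induction on $|V(G)|$, following the guaranteed legal sequence of operations and giving each vertex $v$ a budget of $f(v) + 1$ tokens in the correspondence-painting game. Let $u$ be the vertex removed by the first operation. If the operation is $\mathsf{Delete}_{[u]}$, then in $G - u$ every neighbor of $u$ has its requirement dropped by one, so in $G$ each such neighbor carries exactly one \emph{spare} token beyond what the induction needs on $G - u$; the Painter plays the inductive strategy on $G - u$ and uses $u$'s own token together with these spare tokens to color $u$ while respecting the at-most-one matched conflict between $u$ and each neighbor. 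If the operation is $\mathsf{DeleteSave}_{[u; w]}$, the legality condition $f(u) > f(w)$ means $u$ has at least one more token than $w$; since the correspondence between $u$ and $w$ is a single matching edge, this strict surplus is exactly what lets the Painter always find a color for $u$ avoiding the one color of $w$ it could conflict with, so $w$ need not spend a token, mirroring the fact that $\mathsf{DeleteSave}$ leaves $f(w)$ unchanged.

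I expect the main obstacle to be making this fourth link fully rigorous. One must first fix a precise formalization of the online DP-painting game: rounds in which the Lister marks a set of live vertices and reveals the current correspondence among them, the Painter colors a conflict-free subset, and each unchosen marked vertex loses a token. The induction must then be carried so that the revealed correspondence stays consistent across rounds (each color interacts with a neighbor at most once) and the token bookkeeping exactly tracks the evolution of $f$. The genuinely delicate point is the $\mathsf{DeleteSave}$ step, where the strict inequality $f(u) > f(w)$ has to be converted into a round-by-round guarantee that the Painter can protect $w$'s token; everything else — the two generalization steps, the online-versus-offline comparison, and $\textsf{wd} \le \textsf{d}$ — is routine once the game and cover definitions are pinned down.
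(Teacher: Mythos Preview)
The paper does not prove this proposition. It is quoted as a result of Bernshteyn and Lee \cite{MR4606413} and used as background; no argument is given in the present paper, so there is nothing here to compare your proposal against.

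Your sketch is nonetheless sound and tracks the Bernshteyn--Lee argument. The four ``easy'' links are handled correctly; in particular your justification of $\textsf{wd}(G)\le \textsf{d}(G)$ via a degeneracy ordering processed by $\mathsf{Delete}$ alone is right once one notes that each vertex is decremented exactly once per earlier neighbour and hence at most $\textsf{d}(G)$ times before its own removal. For the substantive link $\chi_{\textsf{DPP}}(G)\le\textsf{wd}(G)+1$, your inductive plan---peel off the first vertex of a legal removal sequence and invoke a winning Painter strategy on the smaller graph with the reduced token function---is the right skeleton. You correctly isolate the $\mathsf{DeleteSave}$ step as the delicate one: turning the strict inequality $f(u)>f(w)$ into a round-by-round guarantee that Painter can handle $u$ without ever charging $w$ a token requires a precise formulation of the DP-paint game and a careful case analysis within each round, which your proposal flags but does not supply. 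That is the one place a fully written proof would need real work; the rest is routine, exactly as you say.
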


It is interesting to investigate whether known upper bounds on $\chi_{\ell}(G), \chi_{\textsf{DP}}(G)$ and $\chi_{\textsf{DPP}}(G)$ are true for  $\mathsf{wd}(G) + 1$. Han \etal \cite{MR4663366} proved that planar graphs without $3$-, $6$- and $7$-cycles are weakly $2$-degenerate. An \emph{$l^{-}$-cycle} is a cycle of length at most $l$. 

\begin{theorem}[Han \etal \cite{MR4663366}]
If $G$ is a triangle-free plane graph in which no $4$-cycle is normally adjacent to a $5^{-}$-cycle, then $G$ is weakly $2$-degenerate.
\end{theorem}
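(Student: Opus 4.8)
The plan is to argue by contradiction via a vertex-minimal counterexample, but phrased for the flexible weak-degeneracy framework rather than for ordinary degeneracy. Concretely, I would prove the stronger statement that every graph $G$ in the given class is weakly $f$-degenerate whenever $f(v)\ge 2$ for all $v\in V(G)$, and take $(G,f)$ to be such a counterexample minimizing $|V(G)|$. The point of keeping $f$ as a general function (rather than the constant $2$) is that $\mathsf{Delete}$ and $\mathsf{DeleteSave}$ lower $f$ on neighbours, so after removing a vertex the residual pair $(G',f')$ no longer has a constant target; carrying an arbitrary $f\ge 2$ through the induction is what makes the reduction self-contained. To finish from such a minimal counterexample it then suffices to exhibit in $(G,f)$ a single legal $\mathsf{Delete}$ or $\mathsf{DeleteSave}$ step whose output $(G',f')$ again meets the required local lower bound; minimality makes $G'$ weakly $f'$-degenerate, and prepending the one step shows $G$ is weakly $f$-degenerate, a contradiction.

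Next I would isolate the reducible configurations. First one shows $\delta(G)\ge 3$: a vertex $u$ with $\deg(u)\le 2$ can be removed by $\mathsf{Delete}_{[u]}$, since each neighbour's value drops by at most $1$ and stays $\ge 1\ge 0$, so the step is legal and lands in the inductive class. The interesting case is the $3$-vertices, which ordinary $2$-degeneracy cannot handle and for which $\mathsf{DeleteSave}$ is the crucial tool: for a $3$-vertex $u$ with a neighbour $w$, the step $\mathsf{DeleteSave}_{[u;w]}$ is legal once $f(u)>f(w)$ and charges only the two neighbours other than $w$. I would show that a $3$-vertex whose incident-face pattern is suitably light—lying on enough short faces, or having a neighbour that can serve as the saved vertex $w$—is reducible, and likewise rule out a short list of local configurations of $3$-vertices and small faces. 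Verifying reducibility means checking that after the chosen operation the resulting $f'$ still meets its threshold everywhere, which must be done operation-by-operation and is where the $f(u)>f(w)$ and nonnegativity constraints bite.

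With all reducible configurations excluded and $\delta(G)\ge 3$, I would derive a contradiction from Euler's formula. Assign initial charges $\mu(v)=\deg(v)-4$ and $\mu(f)=\ell(f)-4$; since $\sum_v\mu(v)+\sum_f\mu(f)=-8$ and $G$ is triangle-free (so $\ell(f)\ge 4$), the only negative charges sit on $3$-vertices, of value $-1$, yet the total is negative. The hypotheses are exactly what tame the short faces: because no $4$-cycle is normally adjacent to a $5^-$-cycle, no two $4$-faces can share an edge, so around any vertex no two consecutive faces are $4$-faces, and in particular every $3$-vertex is incident to at least two faces of length $\ge 5$. Designing discharging rules that send charge from the $({\ge}5)$-faces, each carrying charge $\ge 1$, to the needy $3$-vertices, and checking against the reducibility constraints that no donating face is overdrawn, should render every final charge nonnegative, contradicting the total $-8$.

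The main obstacle is not the global charge count but the reducibility bookkeeping: unlike classical degeneracy, each $\mathsf{DeleteSave}$ imposes the strict inequality $f(u)>f(w)$ and every operation must keep $f'$ nonnegative, so one must verify that a chosen configuration can be peeled off in some order without any value dropping below its threshold. Pinning down the minimal list of reducible configurations so that it matches precisely what the discharging needs—and making the normal-adjacency hypothesis, that the intersection equals $K_2$ rather than merely containing a shared edge, do the work of separating $4$-faces from the short faces—is the delicate part; the remaining charge accounting is routine.
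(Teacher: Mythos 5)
This statement is quoted from Han et al.\ \cite{MR4663366}; the paper you are being compared against gives no proof of it, so the only fair comparison is with the general machinery the paper uses for its own analogous results (a minimal counterexample, the Bernshteyn--Lee Gallai-type lemma, and explicit removal orders for whole configurations, as in \cref{sec:4}).

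Measured against that, your proposal has a genuine gap, in two places. First, the inductive framework does not close as stated. You propose to prove weak $f$-degeneracy for all $f\ge 2$ and to finish by ``a single legal \textsf{Delete} or \textsf{DeleteSave} step whose output $(G',f')$ again meets the required local lower bound.'' But both operations decrement $f$ on neighbours, so after deleting even a $2$-vertex some neighbour typically has $f'=1$, and the hypothesis $f'\ge 2$ fails; a single step almost never lands back in your inductive class. The workable versions of this argument either carry a degree-sensitive lower bound (variable degeneracy, $f(v)\ge 2-(d_G(v)-d_H(v))$-type conditions) or, as in \cref{sec:4} of this paper, take a counterexample minimal in the sense that every proper subgraph has $\mathsf{wd}\le 2$, apply \cref{Gallai} to force $\delta\ge 3$ and the GDP-tree structure on degree-$3$ vertices, and then reduce a configuration $W$ by first removing all of $G-W$ by minimality and only then peeling $W$ in a hand-chosen order (one \textsf{DeleteSave} followed by \textsf{Delete}s), tracking the residual function on $W$ from the external degrees. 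Your sketch never sets up either mechanism. Second, and more importantly, the entire substance of the theorem --- the list of reducible configurations and the discharging rules --- is deferred. Your own charge count shows why this cannot be waved away: with $\mu(f)=\ell(f)-4$ a $5$-face carries only $1$, yet a light $5$-face (five incident $3$-vertices, each needing $+1$ from two $5^+$-faces) would be asked to send $\tfrac52$. So light $5$-faces, and several further local patterns of $3$-vertices around short faces, must themselves be proved reducible under the $f(u)>f(w)$ and nonnegativity constraints before the discharging can balance; identifying and verifying exactly those configurations is the proof, and it is the part you explicitly leave open.
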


Wang \cite{MR4564473} considered the weak degeneracy of planar graphs without $4$- and $6$-cycles.

\begin{theorem}[Wang \cite{MR4564473}]\label{w4v}
\text{}
\begin{enumerate}[label = (\arabic*)]
\item Let $G$ be a planar graph without $4$-, $6$- and $9$-cycles. If there are no $7$-cycles normally adjacent to $5$-cycles, then $G$ is weakly $2$-degenerate.

\item Let $G$ be a planar graph without $4$-, $6$- and $8$-cycles. If there are no $3$-cycles normally adjacent to $9$-cycles, then $G$ is weakly $2$-degenerate.
\end{enumerate}
\end{theorem}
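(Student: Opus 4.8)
The plan is to prove both statements by the standard scheme of a minimal counterexample together with a discharging argument, where reducibility of configurations is verified through legal sequences of the $\mathsf{Delete}$ and $\mathsf{DeleteSave}$ operations applied to the constant weight function $f \equiv 2$; the two parts use the same machinery with different lists of forbidden short cycles.

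Start with part (1). Suppose $G$ is a counterexample minimising $|V(G)| + |E(G)|$, fix a plane embedding, and observe that $G$ is $2$-connected, since a cut vertex or a bridge decomposes $G$ into smaller graphs that are weakly $2$-degenerate by minimality, and these decompositions can be reassembled. The reducibility step is to show that $G$ contains none of a finite list of local configurations. The simplest is $\delta(G) \ge 3$: a vertex of degree at most $2$ has all of its (at most two) neighbours still carrying weight $2 \ge 1$, so applying $\mathsf{Delete}$ to it is legal, contradicting minimality. Next, no short chain of $2$- and $3$-vertices may occur, because the vertices of such a chain can be removed one by one with $\mathsf{Delete}$ in an order that always leaves the few external neighbours with weight at least $1$. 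The crucial use of $\mathsf{DeleteSave}$ is that a $4$-vertex $v$ can be removed as soon as one of its neighbours $w$ has had its weight decreased to at most $1$ (so that $f(v) = 2 > f(w)$) while the other three neighbours still have weight at least $1$; this is exactly the extra unit that separates weak $2$-degeneracy from ordinary $2$-degeneracy, and it lets one peel off configurations consisting of a $4$-vertex attached to an already-deletable small-degree vertex. Assembling all such reductions yields a catalogue of forbidden configurations constraining how $2$- and $3$-vertices, together with the $3$-, $5$- and $7$-faces incident to them, may appear in $G$.

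Then comes the discharging. Assign to each vertex $v$ the initial charge $\mu(v) = \deg(v) - 4$ and to each face $\phi$ the initial charge $\mu(\phi) = \ell(\phi) - 4$, where $\ell(\phi)$ is the length of $\phi$; by Euler's formula the total charge equals $-8$. Under the hypotheses of part (1) there are no $4$-, $6$- or $9$-faces, so the only faces of negative charge are the $3$-faces (charge $-1$), whereas $5$-faces have charge $+1$, $7$-faces charge $+3$, and longer faces even more; among vertices, only $2$-vertices (charge $-2$) and $3$-vertices (charge $-1$) are negative. One then sets up discharging rules moving charge from large faces and high-degree vertices onto the $3$-faces and the $2$- and $3$-vertices, using the forbidden configurations to guarantee that every recipient is incident to enough donors to reach nonnegative charge. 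The hypothesis that no $7$-cycle is normally adjacent to a $5$-cycle enters here, controlling the adjacency structure between $5$-faces and $7$-faces so that the faces supplying charge near the triangles are never forced below zero. Ending with every charge nonnegative contradicts the total of $-8$, so part (1) holds.

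Part (2) runs along the same lines: the charge assignment is unchanged, there are now no $4$-, $6$- or $8$-faces, the negative faces are again the $3$-faces while $5$-faces keep charge $+1$ and $7$-faces and $9$-faces are positive, and the condition that no $3$-cycle is normally adjacent to a $9$-cycle plays the analogous role of preventing a $9$-face from being drained by its incident triangles. The main obstacle in both parts is the discharging bookkeeping: one must make the reducibility catalogue rich enough --- in particular genuinely exploiting $\mathsf{DeleteSave}$, not merely $\mathsf{Delete}$ --- that the finitely many local types of $3$-faces, $5$-faces and low-degree vertices can each be shown to finish with nonnegative charge. Calibrating the discharging rules and checking every case, especially the demanding $2$-vertices, each of which must receive a full $+2$, and the clusters of $3$- and $5$-faces around a $7$-face, is the long and delicate part of the work.
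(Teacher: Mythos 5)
This statement (Theorem~1.5 in the paper) is quoted from Wang's earlier article \cite{MR4564473}; the present paper gives no proof of it, so there is no internal proof to measure you against. Judged on its own, your proposal is not a proof but a description of the generic discharging template: you never exhibit the catalogue of reducible configurations, never state the discharging rules, and never verify the final charges --- you explicitly defer ``the long and delicate part of the work.'' For a theorem of this type that deferred part \emph{is} the theorem; a correct charge count ($\mu(v)=\deg(v)-4$, $\mu(\phi)=\ell(\phi)-4$, total $-8$) plus the observation that the hypotheses forbid $4$-, $6$- and $9$-faces (resp.\ $4$-, $6$- and $8$-faces) does not by itself yield a contradiction, as the totals always sum to $-8$ and the whole burden lies in showing that a counterexample avoids enough local structure for the rules to balance.

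Two of the concrete claims you do make are also shaky. First, to prove $\delta(G)\geq 3$ you delete the low-degree vertex $u$ \emph{first}; this is a legal operation, but it leaves the function equal to $1$ on the neighbours of $u$, and minimality only tells you that $G-u$ is weakly $2$-degenerate with the \emph{constant} function $2$, not with this smaller function. The standard fix is to remove $u$ \emph{last}: run the removal sequence for $G-u$ (each operation stays legal because $f(u)=2\geq\deg(u)$ absorbs all decrements), then delete the isolated $u$. Second, your assertion that ``chains of $2$- and $3$-vertices can be removed one by one with \textsf{Delete}'' is not justified: with $f\equiv 2$ a cluster of $3$-vertices is not reducible by naive deletions, and the actual arguments in this area (including Wang's proof and the proof of Theorem~\ref{WD-MAIN} in this paper) rely on the Gallai-type theorem of Bernshteyn and Lee (\cref{Gallai}) --- in a minimal graph of weak degeneracy $3$ the $3$-vertices induce GDP-trees --- together with explicit \textsf{DeleteSave} sequences such as the one used for the configuration in \cref{BAD10FACE}. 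Without that tool, or a worked-out substitute, the reducibility half of your outline does not go through.
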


Some classes of weakly $3$-degenerate graphs are discussed in \cite{Wang2019+,MR4746933}. Various further generalizations of weakly $f$-degenerate can be found in \cite{ZZZ2023}.

In this paper, we establish the following result.

\begin{theorem}\label{WD-MAIN}
If $G$ is a plane graph without $4$-, $7$-, $9$-cycles, and $5$-cycles normally adjacent to $3$-cycles, then $G$ is weakly $2$-degenerate.
\end{theorem}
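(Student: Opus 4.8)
The plan is to argue by contradiction, taking a counterexample $G$ to \cref{WD-MAIN} with $|V(G)|$ minimum. Since deleting vertices from a plane graph neither creates new cycles nor new adjacencies between cycles, the class of plane graphs without $4$-, $7$-, $9$-cycles and $5$-cycles normally adjacent to $3$-cycles is closed under taking subgraphs; hence every plane graph in this class with fewer vertices than $G$ is weakly $2$-degenerate. Applying \cref{Local} to $G$, either $\delta(G) \le 2$, or $G$ contains one of the configurations of \cref{PP}. In each case I would exhibit, starting from the constant budget $f \equiv 2$, a sequence of legal \textsf{Delete}/\textsf{DeleteSave} operations that removes all of $V(G)$, contradicting the choice of $G$.

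The engine of all cases is the following removal principle, which lets me peel off a bounded-interface set last. Suppose $S \subsetneq V(G)$ is such that every vertex of $S$ has at most two neighbours outside $S$. I would first run, on $G$ itself, a legal removal sequence for the smaller graph $G - S$ (which exists by minimality with budget $2$). Because $S$ is removed last, the budget of each vertex of $G-S$ evolves exactly as in the stand-alone run, so every operation of that run stays legal on $G$; the only additional effect is that each $s \in S$ is decremented once per deleted outside-neighbour, and since it has at most two such neighbours its budget never drops below $0$. After this phase the graph is $G[S]$, and each $s \in S$ retains budget at least $f_S(s) \coloneqq 2 - |N_G(s) \setminus S|$. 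By monotonicity of weak degeneracy it then suffices to verify that $G[S]$ is weakly $f_S$-degenerate; removing $S$ completes a legal removal of $G$.

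With this principle the case $\delta(G) \le 2$ is immediate: take $S = \{v\}$ for a vertex $v$ of degree at most $2$, so that $f_S(v) \ge 0$ and $G[S]$ is a single vertex. For the configuration that is a $10$-face normally adjacent to a $3$-face, and for the one that is an $8$-face normally adjacent to a $5$-face, I would take $S$ to be the vertex set of the configuration. A direct check of the labelled degrees shows each vertex of $S$ has at most one neighbour outside $S$, and that the two vertices on the shared edge receive $f_S = 2$ while all others receive $f_S = 1$. The removal of $G[S]$ then begins with a \textsf{DeleteSave} at one of the two budget-$2$ vertices, saving a suitable neighbour so as to open the long face into a path in which exactly one endpoint has budget $0$; such a path is removed by repeatedly deleting from the budget-$0$ end, each \textsf{Delete} being legal because the vertex being removed still has a budget-$1$ neighbour. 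These are finite verifications.

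The main obstacle is the configuration consisting of two adjacent $10$-faces (\cref{BAD10FACE}), which is exactly where the budget is tightest: taking $S$ to be its whole vertex set leaves $G[S]$ with more independent cycles than the two available budget-$2$ vertices can break, so the naive ``remove $S$ last'' reduction does not by itself succeed and must be refined. I expect the resolution to exploit the two degree-$4$ vertices together with the absence of $4$-, $7$-, $9$-cycles: either by orchestrating the outside-phase so that the removal of $G-S$ preserves (saves) one of these vertices and thereby supplies the missing unit of slack to $G[S]$, or by first deleting a single well-chosen vertex that breaks the extra cycle and reduces $G$ to a smaller graph in the class. Handling this configuration cleanly, while keeping every intermediate budget nonnegative, is the delicate heart of the argument; the remaining bookkeeping is routine.
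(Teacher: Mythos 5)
Your overall architecture coincides with the paper's: take a vertex-minimal counterexample, invoke \cref{Local}, and defeat each outcome by exhibiting a legal removal sequence. Your ``removal principle'' (run the legal sequence for $G-S$ first, then finish $G[S]$ with the residual budgets $f_S(s)=2-|N_G(s)\setminus S|$) is sound and is exactly the mechanism the paper uses implicitly when it removes $G-W$ by minimality and then works with the resulting function $f'$ on $W$. Your treatment of $\delta(G)\le 2$ and of the configurations in \cref{10Cap3Fig} and \cref{8Cap5Fig} is correct as far as it goes: in each of those two configurations the shared edge $v_3v_4$ has both endpoints with all neighbours inside $S$ (residual budget $2$), a single \textsf{DeleteSave} across to the attached ear opens everything into a tree peelable from its unique budget-$0$ leaf, and the arithmetic checks out. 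The paper dispatches these two configurations differently, via Bernshteyn--Lee's Gallai-type theorem (\cref{Gallai}\ref{G2}): all their vertices have degree $3$, and the induced subgraph is $2$-connected but neither a cycle nor a complete graph, hence not a GDP-tree. That route is shorter and, being stated for induced subgraphs, is automatically robust against possible extra edges among the configuration's vertices --- a point your explicit-sequence approach would still need to rule out using the forbidden-cycle hypotheses.

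The genuine gap is the configuration of \cref{BAD10FACE}, which you explicitly leave unresolved. Your diagnosis of why the naive $S=V(\text{configuration})$ reduction fails is accurate, but neither of your two proposed repairs survives scrutiny as stated: minimality hands you \emph{some} legal removal sequence for $G-S$ as a black box, so you cannot ``orchestrate the outside phase'' to save a chosen vertex; and if you instead delete one well-chosen vertex of $G$ first, the neighbours of that vertex enter the remainder with budget $1$ rather than $2$, so the inductive hypothesis (weak $2$-degeneracy of the proper subgraph) no longer applies to what is left. The paper's actual resolution stays entirely within your framework but places the single \textsf{DeleteSave} on the \emph{left} decagon rather than at the interface: it picks the edge $x_3x_4$ where $x_3$ (incident to both connecting edges' side) has all three neighbours inside $W$ and hence residual budget $2$, while $x_4$ has a neighbour outside $W$ (this is where the absence of $4$-, $7$-, $9$-cycles and of $5$-cycles normally adjacent to $3$-cycles is used, to certify that $x_4$ really has only the two neighbours $x_3,x_5$ in $G[W]$) and hence residual budget $1$; the operation $\mathsf{DeleteSave}_{[x_3;x_4]}$ is then legal and supplies exactly the one unit of slack needed, after which the remaining vertices are peeled by plain \textsf{Delete}s in an order that traverses the second decagon and returns along the first, ending at the saved vertex $x_4$. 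Without this (or an equivalent) completion, your argument does not eliminate the last configuration and therefore does not yield the theorem.
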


\begin{corollary}
If $G$ is a plane graph without $4$-, $6$-, $7$-, $9$-cycles, then $G$ is weakly $2$-degenerate.
\end{corollary}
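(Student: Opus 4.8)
The plan is to obtain this as an immediate consequence of \cref{WD-MAIN}. The hypotheses of the corollary and those of \cref{WD-MAIN} agree on forbidding $4$-, $7$-, and $9$-cycles; the only apparent gap is that the corollary forbids $6$-cycles outright, whereas \cref{WD-MAIN} instead forbids $5$-cycles normally adjacent to $3$-cycles. So the entire task reduces to a single implication: if a plane graph has no $6$-cycle, then it has no $5$-cycle normally adjacent to a $3$-cycle. Once this is verified, every graph satisfying the corollary's hypotheses satisfies those of \cref{WD-MAIN}, and weak $2$-degeneracy follows directly.

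The key step is therefore to prove that implication, and the argument I would use is a short cycle-length computation. Suppose for contradiction that $G$ contains a $5$-cycle $C$ normally adjacent to a $3$-cycle $D$. By definition of normal adjacency, $C \cap D$ is isomorphic to $K_2$, so $C$ and $D$ share exactly one edge $xy$ together with its two endpoints, and they have no further common vertices. I would then write $C = xy \cup P$, where $P$ is the complementary $x$--$y$ path of $C$ of length $4$, and $D = xy \cup Q$, where $Q$ is the complementary $x$--$y$ path of $D$ of length $2$. Because the only vertices common to $C$ and $D$ are $x$ and $y$, the paths $P$ and $Q$ are internally disjoint, so their union $P \cup Q$ is a cycle of length $4 + 2 = 6$. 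This $6$-cycle contradicts the assumption that $G$ has no $6$-cycle, completing the implication.

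With this established, the deduction is purely formal: a plane graph $G$ without $4$-, $6$-, $7$-, and $9$-cycles has no $5$-cycle normally adjacent to a $3$-cycle, hence meets every hypothesis of \cref{WD-MAIN}, and is therefore weakly $2$-degenerate. I do not expect any genuine obstacle here; the only point requiring care is the cycle-length bookkeeping that turns a normally adjacent $(5,3)$-pair into a $6$-cycle, and in particular the use of the $K_2$-intersection condition to guarantee that the two complementary paths meet only at the shared endpoints.
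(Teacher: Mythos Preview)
Your proposal is correct and matches the paper's intent: the corollary is stated there without proof as an immediate consequence of \cref{WD-MAIN}, and the only detail to fill in is exactly the observation you supply, that a $5$-cycle normally adjacent to a $3$-cycle produces a $6$-cycle via the two complementary paths of lengths $4$ and $2$. Your use of the $K_{2}$-intersection condition to ensure internal disjointness of those paths is precisely the point that makes the argument go through.
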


We provide a proof for \cref{Local} in \cref{sec:2}, and present a detailed proof for \cref{Liu:1} in \cref{sec:3}. The proof of \cref{WD-MAIN} can be found in \cref{sec:4}.

\section{Proof of \cref{Local}}
\label{sec:2}
For simplicity, let us focus on connected graphs. Otherwise, we consider each component individually. Suppose, for the sake of contradiction, that $G$ is a counterexample to the statement. Therefore, $G$ is a plane graph without $4$-, $7$-, $9$-cycles, and $5$-cycles normally adjacent to $3$-cycles. However, the minimum degree is at least $3$, and there exists no subgraph isomorphic to any configuration as depicted in \cref{PP}. 

Considering the graph's structure, we have the following results. While some of them are straightforward, we omit their proofs. In the remainder of this section, we use $b(f)$ to denote the boundary walk of a face $f$. A \emph{$k$-face} (\emph{$k^{+}$-face}, or \emph{$k^{-}$-face}) is a face of size exactly $k$ (at least $k$, or at most $k$). Similarly, a \emph{$k$-vertex} (\emph{$k^{+}$-vertex}, or \emph{$k^{-}$-vertex}) is a vertex of degree exactly $k$ (at least $k$, or at most $k$). A vertex is \emph{incident with} a face if it lies on the boundary of the face.

\begin{lemma}\label{lem:s}
The following statements are true.
  \begin{enumerate}[label = (\roman*)]
	\item\label{no-chord5} A $5$-cycle has no chords. 
	
	\item\label{no-chord6} A $6$-cycle has no chords. 
	
	\item The boundary walk of a $6$-face consists of a $6$-cycle or two $3$-cycles.
	
	\item\label{5va6} A $5$-face cannot be adjacent to a $6$-face.
	
	\item\label{5va5} Two adjacent $5$-faces must be normally adjacent. 
	
	\item\label{2FiveFaces} A $3$-vertex is incident with at most two $5$-faces.
	
	\item\label{no7face} There are no $7$-faces.
	
	\item The boundary of an $8$-face consists of an $8$-cycle, or two $3$-cycles and a cut edge, or one $3$-cycle and one $5$-cycle.
	
	\item The boundary of a $9$-face consists of one $3$-cycle and one $6$-cycle, or three $3$-cycles.
	
	\item\label{3va10} Every face adjacent to a $3$-face is a $10^{+}$-face.
 \end{enumerate}
\end{lemma}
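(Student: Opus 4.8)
Throughout I use the standing assumption of this section that $\delta(G)\ge 3$, and I would dispatch the ten assertions with three recurring devices rather than one at a time. The first device is \emph{chord splitting}, which settles items (i) and (ii): a chord of a $5$-cycle cuts it into a $3$-cycle and a $4$-cycle, contradicting the absence of $4$-cycles; a chord of a $6$-cycle either joins antipodal vertices, cutting it into two $4$-cycles, again impossible, or joins vertices at distance two, cutting it into a $3$-cycle and a $5$-cycle whose intersection is exactly the chord, that is, a $5$-cycle normally adjacent to a $3$-cycle, which is excluded by hypothesis.

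The second device is a \emph{boundary-walk decomposition} that settles items (iii), (vii), (viii), (ix) simultaneously and also records two facts the remaining items need. For a face $f$, write $|b(f)| = c + 2r$, where $c$ counts the edges bordering $f$ exactly once and $r$ counts the bridges on $b(f)$ (each traversed twice). The once-bordering edges fall into cycle-blocks, and since $\delta(G)\ge 3$ no bridge can terminate at a leaf, so every bridge is a cut edge joining two cycle-blocks. The boundary of $f$ is therefore described by a partition of $|b(f)|$ into cycle lengths, linked by cut edges. Enumerating the partitions that avoid the forbidden lengths $4$, $7$, $9$ yields: for a $6$-face, a $6$-cycle or two $3$-cycles; no $7$-face at all; for an $8$-face, an $8$-cycle, a $3$-cycle together with a $5$-cycle, or two $3$-cycles joined by a cut edge; and for a $9$-face, a $3$-cycle together with a $6$-cycle or three $3$-cycles. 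The same enumeration incidentally shows that there is no $4$-face and that every $5$-face is bounded by a $5$-cycle.

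The third device \emph{glues two face-cycles along a shared edge}: if faces bounded by cycles $C_1$ and $C_2$ meet in a single edge $e$, then deleting $e$ and concatenating the two complementary paths yields a cycle of length $|C_1|+|C_2|-2$. This proves (iv), since a $5$-face adjacent to a $6$-face would produce a $9$-cycle. It proves (x): the edge shared by a $3$-face with a neighbour $g$ of size at most $9$ lies on a cycle-block of $g$ of size $3$, $5$, $6$, or $8$ (the only block sizes occurring in such faces), and gluing the triangle to it yields a $4$-cycle, a forbidden $5$-to-$3$ normal adjacency, a $7$-cycle, or a $9$-cycle respectively, so every neighbour of a $3$-face is a $10^{+}$-face. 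For (v) the formula sends two adjacent $5$-faces to a harmless $8$-cycle, so I instead rule out non-normal adjacency directly: a shared path of length at least two would contain an internal $2$-vertex, against $\delta(G)\ge 3$, so two adjacent $5$-faces meet in exactly one edge. Finally, for (vi), three $5$-faces around a $3$-vertex (each bounded by a $5$-cycle and pairwise normally adjacent by (v)) contribute three internally disjoint paths of length three between its three neighbours, and these concatenate into a $9$-cycle; hence a $3$-vertex is incident with at most two $5$-faces.

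The step I expect to be the main obstacle is controlling adjacencies that are not normal, because the clean identity $|C_1|+|C_2|-2$ presupposes that the two faces share exactly one edge and that the two complementary paths are internally disjoint. Most of the care goes into excluding these degeneracies: a shared path of length at least two forces a $2$-vertex, excluded by $\delta(G)\ge 3$, while an additional shared vertex off the common edge creates a chord or a short cycle already ruled out by items (i)--(ii) or by the forbidden lengths. Making this analysis exhaustive --- particularly in (x), where the neighbouring face can have a composite boundary such as the three-triangle $9$-face, so that one must verify on which cycle-block the shared edge lies --- is where the genuine work of the lemma concentrates.
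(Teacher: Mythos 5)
Your proposal is correct and follows essentially the same route as the paper: the paper likewise disposes of the ``straightforward'' items by enumerating boundary decompositions into cycles and cut edges, and proves \ref{5va6}, \ref{5va5}, \ref{2FiveFaces} and \ref{3va10} by exactly the combination you describe --- concatenating the two face cycles in the normally adjacent case to hit a forbidden length or a forbidden $5$--$3$ adjacency, and killing every vertex identification via $\delta(G)\ge 3$, chordlessness, or a $4$-cycle. One small point of care for \ref{5va6}: when the adjacent $6$-face is bounded by two $3$-cycles the gluing identity yields no $9$-cycle, and one must instead invoke the ban on $5$-cycles normally adjacent to $3$-cycles (plus the usual degeneracy checks), exactly as your block-by-block analysis in \ref{3va10} already does.
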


\begin{proof}[Proof of \ref{5va6}]
Let $f = [u_{1}u_{2}u_{3}v_{1}v_{2}]$ be a $5$-face, and $g$ be an adjacent $6$-face. If $b(g)$ consists of two $3$-cycles, then $b(f)$ would be adjacent to a $3$-cycle, which is a contradiction. So we assume that $b(g)$ is a $6$-cycle $v_{1}v_{2}v_{3}v_{4}v_{5}v_{6}v_{1}$. If $b(f)$ and $b(g)$ are normally adjacent, then $b(f)$ and $b(g)$ form a $9$-cycle, resulting in a contradiction. This implies that $\{u_{1}, u_{2}, u_{3}\} \cap \{v_{3}, v_{4}, v_{5}, v_{6}\} \neq \emptyset$. Since $\delta(G) \geq 3$, we conclude that $u_{1} \neq v_{3}$ and $u_{3} \neq v_{6}$. It follows from \ref{no-chord5} and \ref{no-chord6} that $\{v_{3}, v_{6}\} \cap \{u_{1}, u_{2}, u_{3}\} = \emptyset$ and $\{u_{1}, u_{3}\} \cap \{v_{3}, v_{4}, v_{5}, v_{6}\} = \emptyset$. The only remaining consideration is whether $u_{2}$ is identified with $v_{4}$ or $v_{5}$. If $u_{2} = v_{4}$, then $v_{2}u_{1}v_{4}v_{3}v_{2}$ is a $4$-cycle, a contradiction. By symmetry, $u_{2}$ cannot be identified with $v_{5}$.
\end{proof}

\begin{proof}[Proof of \ref{5va5}]
Consider two adjacent $5$-faces, $f = [u_{1}u_2u_3v_{1}v_{2}]$ and $g = [v_{1}v_{2}v_{3}v_{4}v_{5}]$. Since $\delta(G) \geq 3$ and $G$ has no $4$-cycles, we confirm that $\{u_{1}, u_{3}\} \cap \{v_{3}, v_{4}, v_{5}\} = \emptyset$ and $\{v_{3}, v_{5}\} \cap \{u_{1}, u_{2}, u_{3}\} = \emptyset$. If $u_{2} = v_{4}$, then $v_{2}u_{1}v_{4}v_{3}v_{2}$ is a $4$-cycle, leading to a contradiction. 
\end{proof}

\begin{proof}[Proof of \ref{2FiveFaces}]
Suppose that there exists a $3$-vertex incident with three $5$-faces $f, g$ and $h$. It follows from \ref{5va5} that the three $5$-faces are pairwise normally adjacent. Consequently, the boundaries of these three faces form a $9$-cycle, resulting in a contradiction.
\end{proof}

\begin{proof}[Proof of \ref{3va10}]
We observe that if two $3$-faces are adjacent, then $G$ would contain a $4$-cycle. Since there are no $4$-cycles, it follows that a $3$-face is not adjacent to a $4$-face. Additionally, a $3$-face is not adjacent to a $5$-face. 

Consider a $5$-face $f=[v_1v_2v_3v_4v_5]$. Suppose a $3$-face $h=[v_1v_2u_1]$ is adjacent to $f$. Since the minimum degree is at least $3$, we have $u_1 \neq v_3, v_5$. Furthermore, $u_{1} \neq v_{4}$; otherwise, $v_2v_1v_5v_4v_2$ is a $4$-cycle. Thus, a $3$-face cannot be adjacent to a $5$-face. 

Consider a $6$-face $f=[v_1v_2v_3v_4v_5v_6]$. Suppose a $3$-face $h=[v_1v_6u_1]$ is adjacent to $f$. If $f$ is normally adjacent to $h$, then a $7$-cycle is formed. Since the minimum degree is at least $3$, we have $u_1 \neq v_{2}, v_{5}$. If $u_1 = v_3$ or $u_1 = v_4$, then $v_6v_3v_4v_5v_6$ or $v_1v_2v_3v_4v_1$ is a $4$-cycle. There are no $3$-faces adjacent to $6$-faces when the boundary of the $6$-face consists of two $3$-cycles. Thus, a $3$-face is not adjacent to a $6$-face. 

Note that the boundary of every $7$-face consists of a $7$-cycle, so there is no $3$-face adjacent to a $7$-face. 

Consider an $8$-face $f=[v_1v_2v_3v_4v_5v_6v_7v_8]$, where the boundary of $f$ is an $8$-cycle. Suppose that a $3$-face $h=[v_1v_8u_1]$ is adjacent to $f$. A $9$-cycle is formed if $f$ is normally adjacent to $h$. Since the minimum degree is at least $3$, we have $u_1 \neq v_2, v_7$. If $u_1 = v_3$ or $u_1 = v_4$, then $v_3v_2v_1v_8v_3$ or $v_1v_2v_3v_4v_1$ forms a $4$-cycle. By symmetry, $u_1 \neq v_6$ and $u_1 \neq v_5$. Assuming a $3$-face $h$ is adjacent to an $8$-face $f$ with the boundary consisting of a $3$-cycle and a $5$-cycle, or two $3$-cycles and a cut edge, it cannot be included in another $3$-cycle that is part of the boundary of $f$. Thus, there are no $3$-faces adjacent to $8$-faces. 

Note that the boundary of a $9$-face $f$ consists of three $3$-cycles, or one $3$-cycle and one $6$-cycle, then no edge on the boundary of $f$ can be included in other $3$-cycles since $G$ contains no 4-, 7-cycles and no 5-cycles normally adjacent to 3-cycles. Thus, there are no $3$-faces adjacent to $9$-faces. Therefore, every $3$-face is adjacent to three $10^{+}$-faces.
\end{proof}

For a face $f$, we use $d(f)$ to denote the degree (or size) of $f$. A \emph{bad face} $f$ is a $10$-face $[v_{1}v_{2}\dots v_{10}]$ incident with ten distinct $3$-vertices and two $3$-faces $[v_{1}v_{2}u_{1}]$ and $[v_{3}v_{4}u_{3}]$. Notably, a bad face is bounded by a $10$-cycle. Since no configuration as depicted in \cref{10Cap3Fig} exists, each of $u_{1}$ and $u_{3}$ is a $4^{+}$-vertex. The edge $v_{2}v_{3}$ is referred to as a \emph{good edge}, $u_{1}v_{2}v_{3}u_{3}$ is a \emph{good path}, and the face $g$ incident with $u_{1}v_{2}v_{3}u_{3}$ is a \emph{good face}. According to \cref{lem:s}\ref{3va10}, every good face is a $10^{+}$-face.

Assign an initial charge $\mu(x)$ to each element $x \in V(G) \cup F(G)$ as $\mu(x) = 2d(x) - 6$ for each $x \in V(G)$ and $\mu(x) = d(x) - 6$ for each $x \in F(G)$, where $F(G)$ denotes the face set of $G$. Utilizing Euler's formula and the Handshaking Lemma, the sum of the initial charges is $-12$. We will move the charges around so that each vertex and face ends with a non-negative final charge, thereby reaching a contradiction. Let $\mu'(f)$ denote the final charge after the discharging procedure.

A $3$-vertex is \emph{bad} if it is incident with a $3$-face, \emph{worse} if it is incident with precisely one $5$-face, and \emph{worst} if it is incident with precisely two $5$-faces. A $5$-face is \emph{light} if it is incident with five $3$-vertices.

We employ the following discharging rules:

\begin{enumerate}[label = \textbf{R\arabic*.}, ref = R\arabic*]
\item\label{R1} Each $3$-face receives $1$ from each incident vertex.
\item\label{R2} Let $v$ be a $3$-vertex.
\begin{enumerate}
	\item\label{R2a} If $v$ is a bad vertex, then it receives $\frac{1}{2}$ from each incident $10^{+}$-face.
	\item\label{R2b} If $v$ is a worse vertex incident with a light $5$-face, then it sends $\frac{1}{3}$ to the incident light $5$-face and receives $\frac{1}{6}$ from each incident $8^{+}$-face.
	\item\label{R2c} If $v$ is a worse vertex incident with a non-light $5$-face, then it sends $\frac{1}{8}$ to the incident non-light $5$-face and receives $\frac{1}{16}$ from each incident $8^{+}$-face.
	\item\label{R2d} If $v$ is a worst vertex and the two $5$-faces are light, then it sends $\frac{1}{6}$ to each incident light $5$-face and receives $\frac{1}{3}$ from the incident $8^{+}$-face.
	\item\label{R2e} If $v$ is a worst vertex and the two $5$-faces are non-light, then it sends $\frac{1}{8}$ to each incident non-light $5$-face and receives $\frac{1}{4}$ from the incident $8^{+}$-face.
	\item\label{R2f} If $v$ is a worst vertex adjacent to precisely one light $5$-face, then it sends $\frac{1}{6}$ to the light $5$-face, $\frac{1}{8}$ to the non-light $5$-face, and receives $\frac{1}{3}$ from the incident $8^{+}$-face.
\end{enumerate}	

\item\label{R3} Let $v$ be a $4$-vertex.
\begin{enumerate}
	\item\label{R3a} If $v$ is incident with precisely one $3$-face and precisely one $5$-face, then it sends $1$ to the $5$-face.
	\item\label{R3b} If $v$ is incident with precisely one $3$-face and no $5$-face, then it sends $\frac{1}{2}$ to each incident face adjacent to the $3$-face.
	\item\label{R3c} If $v$ is incident with four $5^{+}$-faces, then it sends $\frac{1}{2}$ to each incident face.
\end{enumerate}

\item\label{R4} Each $5^{+}$-vertex sends $\frac{2}{3}$ to each incident $5^{+}$-face.
\item\label{R5} Let $\mu^{*}$ denote the charges after applying rules \ref{R1}--\ref{R4}. Each good face $g$ sends $\frac{\mu^{*}(g)}{t}$ to each adjacent bad face, where $t$ is the number of adjacent bad faces.
\end{enumerate}

\begin{lemma}\label{lem:rule}
Assume $f = [v_{1}v_{2} \dots v_{10}]$ is a bad $10$-face with associated $3$-faces $[v_{1}v_{2}u_{1}]$ and $[v_{3}v_{4}u_{3}]$. Let $g$ be the face incident with the path $u_{1}v_{2}v_{3}u_{3}$. If both $u_{1}$ and $u_{3}$ are $4$-vertices, then $g$ sends at least $\frac{1}{4}$ to $f$ by \ref{R5}. If $u_{1}$ is a $4$-vertex and $u_{3}$ is a $5^{+}$-vertex, then $g$ sends at least $\frac{1}{2}$ to $f$ by \ref{R5}.
\end{lemma}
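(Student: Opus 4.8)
The plan is to reduce the statement to a lower bound on $\mu^{*}(g)$, the charge on $g$ after rules \ref{R1}--\ref{R4}. Indeed, by \ref{R5} the face $g$ sends exactly $\frac{\mu^{*}(g)}{t}$ to $f$, so it suffices to prove $\frac{\mu^{*}(g)}{t} \geq \frac{1}{4}$ when $u_{1}$ and $u_{3}$ are both $4$-vertices and $\frac{\mu^{*}(g)}{t} \geq \frac{1}{2}$ when one of them is a $5^{+}$-vertex. Since $g$ is adjacent to the $3$-face $[v_{1}v_{2}u_{1}]$, \cref{lem:s}\ref{3va10} forces $g$ to be a $10^{+}$-face, so its initial charge is $\mu(g) = d(g) - 6 \geq 4$; this surplus is the reservoir from which the transfers to the adjacent bad faces are paid.

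First I would organise the computation of $\mu^{*}(g)$ by walking around the boundary of $g$. Under \ref{R1}--\ref{R4}, the face $g$ gives charge only to incident $3$-vertices (via \ref{R2}) and receives charge only from incident $4$- and $5^{+}$-vertices (via \ref{R3} and \ref{R4}). I would group these transfers by good path: every bad face adjacent to $g$ contributes a good path $u_{1}'v_{2}'v_{3}'u_{3}'$ that occupies three consecutive edges of the boundary of $g$, and its two interior vertices $v_{2}', v_{3}'$ are bad $3$-vertices, each drawing $\frac{1}{2}$ from $g$ by \ref{R2a}. Thus every good path costs $g$ exactly $1$, and the profit must come from its two endpoints together with the base charge carried by its three boundary edges.

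The decisive point is the endpoint contribution. For a $4$-vertex endpoint such as $u_{1}$, \cref{lem:s}\ref{3va10} forces the three faces meeting the pendant $3$-face (namely $f$, $g$, and the face across $v_{1}u_{1}$) to be $10^{+}$-faces, so the only face of $u_{1}$ that could be a $5$-face is the one opposite the $3$-face. If that face is a $6^{+}$-face, then $u_{1}$ is incident with exactly one $3$-face and no $5$-face and so sends $\frac{1}{2}$ to $g$ by \ref{R3b}; if it is a $5$-face, then \ref{R3a} diverts $u_{1}$'s charge to that $5$-face and $g$ receives nothing. I would rule out the latter possibility (or absorb its effect) using the absence of $4$-, $7$-, $9$-cycles and of $5$-cycles normally adjacent to $3$-cycles. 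A $5^{+}$-vertex endpoint instead sends $\frac{2}{3}$ to $g$ by \ref{R4}. Balancing the per-path cost $-1$ against $+\frac{1}{2}$ (resp.\ $+\frac{2}{3}$) per endpoint and against the base charge $d(g)-6$, and subtracting whatever is drained by incident $3$-vertices lying outside any good path, yields the required lower bound on $\mu^{*}(g)$; the extra $\frac{2}{3}-\frac{1}{2}=\frac{1}{6}$ delivered by the $5^{+}$-vertex on $f$'s own good path is what upgrades the estimate from $\frac{1}{4}$ to $\frac{1}{2}$.

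The hard part will be the worst-case bookkeeping that converts these per-path estimates into the exact ratios. Two quantities must be controlled simultaneously: the number $t$ of bad faces adjacent to $g$ must be bounded against $d(g)$ (each good path uses three boundary edges, but consecutive paths may share a $u$-endpoint that then lies on two $3$-faces), and the charge bled off to $3$-vertices of $g$ that belong to no good path — long runs of consecutive bad, worse, or worst $3$-vertices — must be shown not to exceed the slack in $d(g)-6$. Establishing that the cycle restrictions forbid the densest such draining patterns, and in particular forbid a $5$-face from stealing an endpoint's charge in the configurations that would otherwise be tight, is where the real work lies; once those structural exclusions are in place, the arithmetic collapses to the two stated bounds.
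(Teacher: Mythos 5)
Your reduction to bounding $\mu^{*}(g)/t$ and the observation that $g$ is a $10^{+}$-face match the paper, but the core of your charge count does not survive scrutiny, for two reasons. First, the $+\frac{1}{2}$ you want from each $4$-vertex endpoint of a good path is not available. A $4$-vertex $u_{1}$ on the $3$-face $[v_{1}v_{2}u_{1}]$ may perfectly well be incident with a second $3$-face (then \ref{R1} exhausts its initial charge $2\cdot4-6=2$ and no clause of \ref{R3} sends anything to $g$), or with a $5$-face on its fourth side (then \ref{R3a} diverts everything to that $5$-face). Neither situation is forbidden: a $5$-cycle or a second triangle meeting $[v_{1}v_{2}u_{1}]$ only in the vertex $u_{1}$ is not \emph{normally} adjacent to it and creates no $4$-, $7$- or $9$-cycle, so the exclusion you hope to extract from the hypotheses does not exist. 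The paper's proof takes zero credit from $4$-vertex endpoints precisely for this reason; the only endpoint credit it uses is the unconditional $\frac{2}{3}$ from a $5^{+}$-vertex via \ref{R4}, which is what separates the $\frac{1}{2}$ conclusion from the $\frac{1}{4}$ conclusion.

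Second, once the endpoint credit is removed, your per-path accounting collapses in the tight case: if $g$ is a $10$-face carrying a single good path with both endpoints of degree $4$, you only know that $g$ has at most eight incident $3$-vertices, giving $\mu^{*}(g)\geq 10-6-8\cdot\frac{1}{2}=0$, which does not yield the claimed $\frac{1}{4}$. The missing ingredient is the reducible configuration of \cref{BAD10FACE}: its exclusion is exactly what guarantees that a good $10$-face adjacent to a bad $10$-face whose two apexes are $4$-vertices has at most \emph{seven} incident $3$-vertices, supplying the extra $\frac{1}{2}$. Your proposal never invokes this configuration, and no bookkeeping of the kind you describe can replace it. You also leave the entire case analysis over $d(g)\in\{10,11,12^{+}\}$ and the bound $t\leq\lfloor d(g)/3\rfloor$ (good paths are edge-disjoint) as ``the hard part,'' so even the sound portions of the plan are not carried out.
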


\begin{proof}
By \cref{lem:s}, $g$ is a $10^{+}$-face. We consider the following three cases according to the size of $g$.
\begin{case}
$g$ is a $10$-face. 
\end{case}
Then $g$ is incident with at most three good edges. 
\begin{subcase}
Both $u_{1}$ and $u_{3}$ are $4$-vertices.
\end{subcase}
Since there is no configuration as depicted in \cref{BAD10FACE}, $g$ is incident with at most seven $3$-vertices. If $g$ is incident with precisely one good edge, then $\mu^{*}(g) \geq 10 - 6 - 7 \times \frac{1}{2} = \frac{1}{2}$ by \ref{R2}, and $g$ sends at least $\frac{1}{2}$ to $f$ by \ref{R5}. If $g$ is incident with precisely two good edges, then it is incident with at most seven $3$-vertices, and $\mu^{*}(g) \geq 10 - 6 - 7 \times \frac{1}{2} = \frac{1}{2}$ by \ref{R2}, and $g$ sends at least $\frac{1}{4}$ to $f$ by \ref{R5}. If $g$ is incident with precisely three good edges, then $g$ is incident with at least four $4^{+}$-vertices, then $\mu^{*}(g) \geq 10 - 6 - 6 \times \frac{1}{2} = 1$ by \ref{R2}, and $g$ sends at least $\frac{1}{3}$ to $f$ by \ref{R5}.

\begin{subcase}
$u_{1}$ is a $4$-vertex and $u_{3}$ is a $5^{+}$-vertex.
\end{subcase}
If $g$ is incident with precisely one good edge, then it is incident with at most eight $3$-vertices, and $\mu^{*}(g) \geq 10 - 6 - 8 \times \frac{1}{2} + \frac{2}{3} = \frac{2}{3}$ by \ref{R2} and \ref{R4}, and $g$ sends at least $\frac{2}{3}$ to $f$ by \ref{R5}. If $g$ is incident with precisely two good edges, then it is incident with at most seven $3$-vertices, and $\mu^{*}(g) \geq 10 - 6 - 7 \times \frac{1}{2} + \frac{2}{3} = \frac{7}{6}$, and $g$ sends at least $\frac{7}{12}$ to $f$ by \ref{R5}. If $g$ is incident with exactly three good edges, then it is incident with at most six $3$-vertices, and $\mu^{*}(g) \geq 10 - 6 - 6 \times \frac{1}{2} + \frac{2}{3} = \frac{5}{3}$, and $g$ sends at least $\frac{5}{9}$ to $f$ by \ref{R5}.

\begin{case}
$g$ is an $11$-face. 
\end{case}
Then $g$ is incident with at most three good edges. 
\begin{subcase}
Both $u_{1}$ and $u_{3}$ are $4$-vertices.
\end{subcase}
If $g$ is incident with precisely one good edge, then $\mu^{*}(g) \geq 11 - 6 - 9 \times \frac{1}{2} = \frac{1}{2}$ by \ref{R2}, and $g$ sends at least $\frac{1}{2}$ to $f$ by \ref{R5}. If $g$ is incident with precisely two good edges, then $\mu^{*}(g) \geq 11 - 6 - 8 \times \frac{1}{2} = 1$ by \ref{R2}, and $g$ sends at least $\frac{1}{2}$ to $f$ by \ref{R5}. If $g$ is incident with precisely three good edges, then $\mu^{*}(g) \geq 11 - 6 - 7 \times \frac{1}{2} = \frac{3}{2}$, and $g$ sends at least $\frac{1}{2}$ to $f$ by \ref{R5}. 

\begin{subcase}
$u_{1}$ is a $4$-vertex and $u_{3}$ is a $5^{+}$-vertex.
\end{subcase}
If $g$ is incident with precisely one good edge, then $\mu^{*}(g) \geq 11 - 6 - 9 \times \frac{1}{2} + \frac{2}{3} = \frac{7}{6}$ by \ref{R2} and \ref{R4}. Thus, $g$ sends at least $\frac{7}{6}$ to $f$ by \ref{R5}. If $g$ is incident with precisely two good edges, then it is associated with at least three vertices of degree $4$ or higher, implying $\mu^{*}(g) \geq 11 - 6 - 8 \times \frac{1}{2} + \frac{2}{3} = \frac{5}{3}$, and $g$ sends at least $\frac{5}{6}$ to $f$ by \ref{R5}. If $g$ is incident with exactly three good edges, then $\mu^{*}(g) \geq 11 - 6 - 7 \times \frac{1}{2} + \frac{2}{3} = \frac{13}{6}$, and $g$ sends at least $\frac{13}{18}$ to $f$ by \ref{R5}. 

\begin{case}
$g$ is a $12^{+}$-face. 
\end{case}

If $g$ is incident with precisely $t$ good edges, then $g$ is incident with at most $d(g) - t$ vertices of degree $3$. Thus, $\mu^{*}(g) \geq d(g) - 6 - (d(g) - t) \times \frac{1}{2} = \frac{d(g)-12}{2} + \frac{t}{2} \geq \frac{t}{2}$. Therefore, $g$ sends at least $\frac{1}{2}$ to $f$ by \ref{R5}. This completes the proof of \cref{lem:rule}.
\end{proof}

\begin{claim}\label{Claim:1}
	Each vertex in $G$ has a non-negative final charge.
\end{claim}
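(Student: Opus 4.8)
The plan is to prove \cref{Claim:1} by examining each vertex $v$ according to its degree. Since we are in the situation where conclusion~(a) of \cref{Local} fails, $\delta(G)\ge 3$, so every vertex begins with charge $\mu(v)=2d(v)-6\ge 0$; moreover, among the rules only \ref{R1}--\ref{R4} ever move charge to or from a vertex (rule \ref{R5} transfers charge between faces only), so only these need be tracked. The degree-$3$ vertices are where essentially all the work lies, and the recurring point is that the three faces incident with a $3$-vertex pairwise share an edge at that vertex, so \cref{lem:s} tightly constrains their sizes.

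Consider a $3$-vertex $v$, which starts with charge $0$. If $v$ is \emph{bad}, it lies on a unique incident $3$-face (two incident $3$-faces would force a $4$-cycle), and its other two incident faces are each adjacent to that $3$-face, hence are $10^{+}$-faces by \cref{lem:s}\ref{3va10}; thus $v$ loses $1$ by \ref{R1} and regains $2\cdot\tfrac12=1$ by \ref{R2a}, ending at $0$. If $v$ is \emph{worst}, its two incident $5$-faces are normally adjacent by \cref{lem:s}\ref{5va5}, and the third incident face $h$ is adjacent to both of them; consequently $h$ is not a $3$-face (\cref{lem:s}\ref{3va10}), not a $5$-face (\cref{lem:s}\ref{2FiveFaces}), not a $6$-face (\cref{lem:s}\ref{5va6}), and not a $7$-face (\cref{lem:s}\ref{no7face}), so $h$ is an $8^{+}$-face. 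One then checks in each of \ref{R2d}--\ref{R2f} that what $v$ sends to its two $5$-faces is recovered from $h$ exactly (with a surplus of $\tfrac{1}{24}$ in the mixed case \ref{R2f}). If $v$ is \emph{worse}, the same reasoning shows that the two incident faces other than its $5$-face are $8^{+}$-faces, and \ref{R2b} or \ref{R2c} balances precisely. In all remaining cases $v$ is incident only with $6^{+}$-faces, sends nothing, and keeps charge $0$.

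For a $4$-vertex $v$, which starts with charge $2$, the incident $3$-faces are pairwise non-consecutive around $v$, so there are $0$, $1$, or $2$ of them. If there are none, then (as there are no $4$-faces) $v$ is incident with four $5^{+}$-faces and \ref{R3c} sends out exactly $2$; if there are two, then \ref{R1} sends $1$ to each and no other rule applies, again a total of $2$; and if there is exactly one, then one of \ref{R3a}, \ref{R3b} applies (the incident $5$-face, if any, is the one ``opposite'' the $3$-face), and together with \ref{R1} the outflow is $1+1$ or $1+\tfrac12+\tfrac12$. In every case $\mu'(v)=0$. For a $k$-vertex $v$ with $k\ge 5$, every incident face is a $3$-face or a $5^{+}$-face, and since $3$-faces are pairwise non-consecutive around $v$ there are at most $\lfloor k/2\rfloor$ of them; writing $j$ for their number, the outflow is $j\cdot1+(k-j)\cdot\tfrac23$ by \ref{R1} and \ref{R4}, and $2k-6-\bigl(j+\tfrac23(k-j)\bigr)=\tfrac13(4k-18-j)\ge0$ because $j\le\lfloor k/2\rfloor\le 4k-18$ for every $k\ge5$ (with equality at $k=5$).

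The main obstacle is the degree-$3$ analysis: for \emph{worse} and \emph{worst} vertices the reception amounts in \ref{R2b}--\ref{R2f} are calibrated to cancel the outflow exactly (apart from the $\tfrac{1}{24}$ slack in \ref{R2f}), so the argument hinges entirely on showing that the incident faces other than the distinguished $5$-face(s) really are $8^{+}$-faces, which is precisely where the exclusion of $6$-faces adjacent to $5$-faces (\cref{lem:s}\ref{5va6}) and of $7$-faces (\cref{lem:s}\ref{no7face}) is needed; everything else is a short computation.
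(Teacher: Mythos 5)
Your proposal is correct and follows essentially the same approach as the paper: a case analysis on the vertex degree, using \cref{lem:s} to force the non-triangular, non-pentagonal faces at worse/worst $3$-vertices to be $8^{+}$-faces and the faces flanking a $3$-face to be $10^{+}$-faces, followed by the same arithmetic verification of rules \ref{R1}--\ref{R4}. The only difference is cosmetic: you spell out explicitly which parts of \cref{lem:s} exclude $5$-, $6$-, and $7$-faces where the paper simply cites the lemma, and you organize the $4$-vertex case by the number of incident $3$-faces rather than by which rule fires.
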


\begin{proof}[Proof of \cref{Claim:1}]
Let $v$ be a $3$-vertex. If $v$ is incident with a $3$-face, then it is incident with two $10^{+}$-faces by \cref{lem:s}\ref{3va10}, thus $v$ is a bad vertex and $\mu'(v) = -1 + \frac{1}{2} \times 2 = 0$ by \ref{R1} and \ref{R2a}. If $v$ is a worse vertex incident with a light $5$-face, then the other two incident faces are $8^{+}$-faces by \cref{lem:s}, thus $\mu'(v) = - \frac{1}{3} + \frac{1}{6} \times 2 = 0$ by \ref{R2b}. If $v$ is a worse vertex incident with a non-light $5$-face, then $\mu'(v) = - \frac{1}{8} + \frac{1}{16} \times 2 = 0$ by \ref{R2c}. If $v$ is a worst vertex incident with two light $5$-faces, then the remaining incident face is an $8^{+}$-face by \cref{lem:s}, implying $\mu'(v) = -\frac{1}{6} \times 2 + \frac{1}{3} \times 1 = 0$ by \ref{R2d}. If $v$ is a worst vertex incident with two non-light $5$-faces, then $\mu'(v) = -\frac{1}{8} \times 2 + \frac{1}{4} \times 1 = 0$ by \ref{R2e}. If $v$ is a worst vertex incident with precisely one light $5$-face, then $v$ sends $\frac{1}{6}$ to the light $5$-face and $\frac{1}{8}$ to the non-light $5$-face by \ref{R2f}, thus $\mu'(v) = -\frac{1}{6} -\frac{1}{8} + \frac{1}{3} > 0$. Note that $v$ cannot be incident with three $5$-faces. For all other cases, $v$ does not send out any charge, so $\mu'(v) = \mu(v) = 0$.

Let $v$ be a $4$-vertex. Since there are no adjacent $3$-faces, $v$ is incident with at most two $3$-faces. Furthermore, a $3$-face is adjacent to three $10^{+}$-faces by \cref{lem:s}\ref{3va10}. If $v$ is incident with a $3$-face and another $5^{-}$-face, then $\mu'(v) = 2 - 1 \times 2 = 0$ by \ref{R1} and \ref{R3a}. If $v$ is incident with precisely one $3$-face and no $5$-face, then $\mu'(v) = 2 - 1 - \frac{1}{2} \times 2 = 0$ by \ref{R1} and \ref{R3b}. If $v$ is incident with four $5^{+}$-faces, then $\mu'(v) = 2 - \frac{1}{2} \times 4 = 0$ by \ref{R3c}.

Let $v$ be a $5^{+}$-vertex. Since there are no adjacent $3$-faces, $v$ is incident with at most $\lfloor \frac{d(v)}{2} \rfloor$ triangular-faces. Thus, $\mu'(v) \geq 2 d(v) - 6 - 1 \times \lfloor \frac{d(v)}{2} \rfloor - \frac{2}{3} \times (d(v) - \lfloor \frac{d(v)}{2} \rfloor) \geq 0$. Hence, every vertex ends with a non-negative charge.
\end{proof}

\begin{claim}\label{Claim:2}
	Each face in $G$ has a non-negative final charge.
\end{claim}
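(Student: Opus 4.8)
The plan is to go through the faces by size, using \cref{lem:s} to recall that the only sizes that occur are $3$, $5$, $6$, $8$, $9$ and $10^{+}$, and to check $\mu'(f)\ge 0$ in each case. The trivial cases come first. A $3$-face starts with $\mu=-3$ and, by \ref{R1}, receives $1$ from each of its three incident vertices, so $\mu'=0$. A $6$-face starts with $\mu=0$; it is adjacent to no $3$-face by \cref{lem:s}\ref{3va10} and is not a good face, so it never sends charge under \ref{R1}--\ref{R5}, whence $\mu'\ge 0$.

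For a $5$-face $f$ (with $\mu(f)=-1$) I would distinguish the light and non-light cases. Any $3$-vertex incident with a $5$-face is either worse or worst: it meets at most two $5$-faces by \cref{lem:s}\ref{2FiveFaces}, and it cannot be bad, since a $3$-vertex on a $3$-face has its remaining two faces in $10^{+}$ by \cref{lem:s}\ref{3va10} and hence meets no $5$-face. When $f$ is light, all five incident vertices are $3$-vertices, contributing $\frac{1}{3}$ if worse (\ref{R2b}) and $\frac{1}{6}$ if worst (\ref{R2d} or \ref{R2f}). The decisive point is that not all five can be worst: marking each edge of $f$ according to whether the face across it is a $5$-face, a worst vertex forces the two marks at it to differ while a worse vertex forces them to agree, so an all-worst boundary would be a proper $2$-colouring of the odd cycle $C_{5}$, which is impossible. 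Thus at least one incident vertex is worse, the inflow is at least $\frac{1}{3}+4\cdot\frac{1}{6}=1$, and $\mu'(f)\ge 0$. When $f$ is non-light it carries a $4^{+}$-vertex; each such vertex sends at least $\frac{1}{2}$ to $f$ (via \ref{R3} or \ref{R4}) and each incident $3$-vertex sends $\frac{1}{8}$ (via \ref{R2c}, \ref{R2e}, or \ref{R2f}), which again totals at least $1$.

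Next I would treat the $8$- and $9$-faces, with initial charges $2$ and $3$. These send charge only to incident worse/worst $3$-vertices, namely at most $\frac{1}{3}$ to a worst vertex and at most $\frac{1}{6}$ to a worse one, while collecting charge from incident $4^{+}$-vertices. The delicate point is that a worst vertex on such a face demands $5$-faces across both of its incident edges, so the crude estimate ``every incident vertex worst and adjacent to light $5$-faces'' would give outflow $\frac{d(f)}{3}$, exceeding the initial charge when $d(f)=8$. The main work here is to rule this out: the excluded configuration of \cref{8Cap5Fig} forbids an $8$-face all of whose vertices are $3$-vertices from being normally adjacent to a light $5$-face, and \cref{lem:s}\ref{5va5} and \ref{2FiveFaces} further restrict how $5$-faces can cluster; together with the inflow from any $4^{+}$-vertex present, these keep $\mu'(f)\ge 0$. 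For a $9$-face the restricted boundary of \cref{lem:s} (a $3$-cycle and a $6$-cycle, or three $3$-cycles) bounds the number of distinct incident $3$-vertices well below $9$ and forces its cut vertices to be $4^{+}$-vertices, so the outflow never reaches the initial $3$.

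The $10^{+}$-faces form the heart of the argument and are where I expect the real difficulty. Here I would separate ordinary $10^{+}$-faces, good faces, and bad faces. For an ordinary $10^{+}$-face (initial charge $d(f)-6\ge 4$) the outflow is at most $\frac{1}{2}$ to each incident bad vertex and at most $\frac{1}{3}$ to each worse/worst vertex, and the excluded configuration of \cref{BAD10FACE} caps the number of $3$-vertices (hence of recipients and of good edges), leaving $\mu'(f)\ge 0$. A bad face, a $10$-face with exactly two incident $3$-faces and ten incident $3$-vertices, is handled by adding to its post-\ref{R4} charge the charge it receives from its good face through \ref{R5}; the precise amount is supplied by \cref{lem:rule}, which has already split into cases on $d(g)$, on the number of good edges, and on whether $u_{1},u_{3}$ are $4$- or $5^{+}$-vertices (the excluded \cref{10Cap3Fig} forcing $u_{1},u_{3}$ to be $4^{+}$-vertices in the first place). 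Finally a good face $g$ ends with $\mu'(g)=\mu^{*}(g)-t\cdot\frac{\mu^{*}(g)}{t}=0$, so one only needs $\mu^{*}(g)\ge 0$, which is exactly what \cref{lem:rule} certifies, along with a large enough per-recipient share. The main obstacle is thus the simultaneous bookkeeping on these $10^{+}$-faces: one must show that every good face keeps enough surplus to feed all of its adjacent bad faces while each bad face is pushed up to $0$, and it is precisely the excluded configurations of \cref{PP} that make these inequalities close.
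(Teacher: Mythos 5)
Your plan follows the paper's proof in outline: the same case split by face size, the same treatment of light versus non-light $5$-faces, and the same mechanism (good faces feeding bad faces via \ref{R5}, quantified by \cref{lem:rule}) for the $10$-faces. Your parity argument on $C_{5}$ showing a light $5$-face cannot have all five incident vertices worst is correct and is a clean substitute for the paper's appeal to \cref{lem:s}\ref{2FiveFaces}. However, the proposal leaves precisely the hard cases as assertions. For the $8$-face with a $4^{+}$-vertex you say the inflow from that vertex closes the gap, but the paper must rule out the scenario where the $4$-vertex sends nothing to $f$ (it lies on a $3$-face and a $5$-face, so \ref{R3a} routes its charge elsewhere); the actual argument is that such a $4$-vertex's $3$-face forces the two faces across the boundary edges $uv, vw$ to be $10^{+}$-faces by \cref{lem:s}\ref{3va10}, so $u$ and $w$ cannot be worst and receive at most $\frac{1}{6}$ each, giving $2-\frac{1}{3}\cdot 5-\frac{1}{6}\cdot 2=0$. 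Nothing in your sketch produces this. Likewise the subcase where $b(f)$ is not an $8$-cycle is not addressed.

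The larger gap is in the $10$-faces. Your claim that an ``ordinary'' $10^{+}$-face closes because \cref{BAD10FACE} ``caps the number of $3$-vertices'' is not correct: that configuration only constrains a $10$-face adjacent to a bad face along a good edge, and a $10$-face with nine or ten $3$-vertices is not excluded. Concretely, a $10$-face with ten bad vertices loses $5$ against an initial charge of $4$, and one with eight bad and two worse vertices loses $4+\frac{1}{3}$; both are rescued only by the \ref{R5} inflow, yet under your reading of a bad face as having \emph{exactly two} incident $3$-faces these faces (which have five, resp.\ four, incident $3$-faces) would receive nothing. The paper's definition requires only ten $3$-vertices and the presence of the two-$3$-face pattern, and the count ``five $3$-faces $\Rightarrow$ five good edges $\Rightarrow$ inflow $\geq \frac{5}{4}$'' is what closes the case. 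Finally, the subcases with exactly one $4$-vertex and seven, eight, or nine bad vertices --- where one must show either that the $4$-vertex is forced onto four $5^{+}$-faces and sends $\frac{1}{2}$ by \ref{R3c}, or that some $3$-vertex receives at most $\frac{1}{6}$ --- are entirely absent, and these are the most delicate computations in the paper's proof of this claim.
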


\begin{proof}[Proof of \cref{Claim:2}]
Let $f$ be an arbitrary face in $G$. Note that there are no $4$- or $7$-faces.
\setcounter{case}{0}
\begin{case}
$f$ is a $3$-face.
\end{case}
Then $\mu'(f) = - 3 + 1 \times 3 = 0$ by \ref{R1}. 

\begin{case}
$f$ is a $5$-face.
\end{case}
First, we assume that $f$ is a light $5$-face, \ie every vertex on $f$ is a $3$-vertex in $G$. By \cref{lem:s}\ref{2FiveFaces}, $f$ is adjacent to at most two $5$-faces. Then $f$ contains at least one worse vertex. By \ref{R2b}, \ref{R2d}, and \ref{R2f}, $f$ receives $\frac{1}{3}$ from each incident worse vertex and $\frac{1}{6}$ from each incident worst vertex. Hence, $\mu'(f) \geq 5 - 6 + \frac{1}{3} \times 1 + \frac{1}{6} \times 4 = 0$. 
Second, assume that $f$ is a non-light $5$-face incident with a $4^{+}$-vertex. Then $f$ receives at least $\frac{1}{2}$ from each incident $4^{+}$-vertex by \ref{R3} and \ref{R4}, and $\frac{1}{8}$ from each incident $3$-vertex by \ref{R2}. Thus, $\mu'(f) \geq 5 - 6 + \frac{1}{2} \times 1 + \frac{1}{8} \times 4 = 0$. 

\begin{case}
$f$ is a $6$-face.
\end{case}
By \cref{lem:s}\ref{3va10} and \ref{5va6}, each face adjacent to $f$ must be a $6^{+}$-face. Thus, $f$ does not send out any charge by applying the rules. It follows that $\mu'(f) \geq 0$. 

\begin{case}
$f$ is an $8$-face.
\end{case}
By \cref{lem:s}\ref{3va10}, $f$ is not adjacent to any $3$-face, and no vertex on $f$ is bad. By \ref{R2}, $f$ sends at most $\frac{1}{3}$ to each incident $3$-vertex. By \ref{R3}, $f$ does not send any charge to incident $4$-vertices, but it probably receives some charge from incident $4$-vertices. 
\begin{subcase}
$b(f)$ is an $8$-cycle.
\end{subcase}
First, assume that every vertex on $f$ is a $3$-vertex. Since there is no configuration as depicted in \cref{8Cap5Fig}, $f$ cannot be adjacent to any light $5$-face. By \ref{R2}, $f$ sends at most $\frac{1}{4}$ to each incident $3$-vertex. Thus, $\mu'(f) \geq 8 - 6 - \frac{1}{4} \times 8 = 0$. Next, assume that $f$ contains at least one $4^{+}$-vertex. If $f$ could receive at least $\frac{1}{3}$ in total from incident $4^{+}$-vertices, then $\mu'(f)\geq 8 - 6 - \frac{1}{3} \times 7 + \frac{1}{3} = 0$. We may assume that $f$ receives less than $\frac{1}{3}$ in total from incident $4^{+}$-vertices. By \ref{R4}, $f$ does not contain $5^{+}$-vertices. Let $v$ be a $4$-vertex on $f$, and $u, w$ be the two neighbors along the boundary of $f$. By \ref{R3c}, $v$ is not incident with four $5^{+}$-faces; otherwise, $f$ receives $\frac{1}{2}$ from $v$, contradicting the fact that $f$ receives less than $\frac{1}{3}$ in total from all incident $4^{+}$-vertices. Thus, $v$ is incident with a $3$-face. By \cref{lem:s}\ref{3va10}, each of $uv$ and $vw$ is incident with a $10^{+}$-face. Hence, neither $u$ nor $w$ is a worst vertex. By \ref{R2}, $f$ sends at most $\frac{1}{6}$ to each of $u$ and $w$. Therefore, $\mu'(f) \geq 8 - 6 - \frac{1}{3} \times 5 - \frac{1}{6} \times 2 = 0$. 

\begin{subcase}
$b(f)$ consists of two $3$-cycles and a cut edge, or one $3$-cycle and one $5$-cycle. 
\end{subcase}

Let $xyzx$ be a $3$-cycle, where $z$ is a cut-vertex. Since $\delta(G) \geq 3$ and $G$ has no $4$-cycles, we have that none of $xy, yz$ and $xz$ is incident with a $3$-face. As every $5$-cycle has no chords, and no $3$-cycle is normally adjacent to a $5$-cycle, we conclude that none of $xy, yz$ and $xz$ is incident with a $5$-face. Hence, none of $xy, yz$ and $xz$ is incident with a $5^{-}$-face. According to the discharging rules, $f$ does not send out charge to $x$ or $y$. Therefore, $\mu'(f) \geq 8 - 6 - \frac{1}{3} \times 6 = 0$. 

\begin{case}
$f$ is a $9$-face.
\end{case}
By \cref{lem:s}\ref{3va10}, $f$ is not adjacent to any $3$-face, and no vertex on $f$ is bad. According to the discharging rules, $f$ sends at most $\frac{1}{3}$ to each incident vertex. Hence, $\mu'(f) \geq 3 - \frac{1}{3} \times 9 = 0$. 

\begin{case}
$f$ is a $10$-face.
\end{case}

\begin{subcase}
$f$ contains at least two $4^{+}$-vertices. 
\end{subcase}
According to the discharging rules, $f$ sends at most $\frac{1}{2}$ to each incident $3$-vertex but no charge to $4^{+}$-vertices. Thus, $\mu^{*}(f) \geq 10 - 6 - \frac{1}{2} \times 8 = 0$, implying $\mu'(f) \geq 0$ by \ref{R5}. 

\begin{subcase}
$f$ contains no $4^{+}$-vertices. 
\end{subcase}

By parity, both the number of bad vertices and worse vertices on $f$ are even. Firstly, suppose that $f$ contains ten bad vertices. Then $b(f)$ must be a $10$-cycle. Since there exists no configuration as depicted in \cref{10Cap3Fig}, $f$ contains five good edges. Thus, $f$ sends at most $\frac{1}{2}$ to each incident $3$-vertex by \ref{R2} and receives at least $\frac{1}{4}$ from each adjacent good face by \cref{lem:rule}. Hence, $\mu'(f) \geq 10 - 6 - \frac{1}{2} \times 10 + \frac{1}{4} \times 5 > 0$. 

Secondly, assume that $f$ contains a worse vertex and precisely eight bad vertices. Thus, $b(f)$ must be a $10$-cycle. By parity, $f$ contains two adjacent worse vertices, and three good edges. Thus, $\mu'(f) \geq 10 - 6 - \frac{1}{2} \times 8 - \frac{1}{6} \times 2 + \frac{1}{4} \times 3 > 0$. 

Thirdly, assume that $f$ contains precisely eight bad vertices and no worse vertices. Observe that $f$ does not contain any worst vertex. Then $\mu'(f) \geq 10 - 6 - \frac{1}{2} \times 8 = 0$. 

Finally, assume that $f$ contains precisely $k$ bad vertices, where $k \leq 6$. If $k \leq 4$, then $\mu'(f) \geq 10 - 6 - \frac{1}{2} \times k - \frac{1}{3} \times (10 - k) = \frac{4 - k}{6} \geq 0$. For $k = 6$, by parity, $f$ contains at most two worst vertices. Thus, $\mu'(f) \geq 10 - 6 - \frac{1}{2} \times 6 - \frac{1}{6} \times 2 - \frac{1}{3} \times 2 = 0$. 

\begin{subcase}
$f$ contains precisely one $4^{+}$-vertex. 
\end{subcase}

Then $f$ cannot be a good face in this case. If $f$ contains a $5^{+}$-vertex, then $\mu'(f) \geq 10 - 6 - \frac{1}{2} \times 9 + \frac{2}{3} > 0$. Assume that $f$ contains nine $3$-vertices and one $4$-vertex $w$. If $f$ can receive at least $\frac{1}{2}$ from $w$, then $\mu'(f) \geq 10 - 6 - \frac{1}{2} \times 9 + \frac{1}{2} = 0$. However, assuming $f$ receives less than $\frac{1}{2}$ from $w$, we find that $f$ receives zero from $w$ by \ref{R3}. If $f$ contains at most six bad vertices, then $\mu'(f) \geq 10 - 6 - \frac{1}{2} \times 6 - \frac{1}{3} \times 3 = 0$. 

Now, suppose that $f$ contains precisely seven bad vertices. Since $7$ is odd, $w$ is on an adjacent $3$-face. Since $f$ receives zero from $w$, the $4$-vertex $w$ is incident with a $5$-face. By \ref{R3a}, $f$ sends nothing to $w$. Note that there exists a $3$-vertex $u$ that is worse or incident with three $6^{+}$-faces, then $f$ sends at most $\frac{1}{6}$ to $u$. Thus, $\mu'(f) \geq 10 - 6 - \frac{1}{2} \times 7 - \frac{1}{6} \times 1 - \frac{1}{3} \times 1 = 0$.

Next, suppose that $f$ contains precisely eight bad vertices. Let $x$ be the remaining non-bad $3$-vertex. Observe that $x$ cannot be a worst vertex. Assume $x$ is a worse vertex incident with a $5$-face $g$. Then $x$ and $w$ are consecutive vertices on $g$. It follows that the eight bad vertices must be on four $3$-faces. Hence, the $4$-vertex $w$ must be incident with four $5^{+}$-faces, and $f$ receives $\frac{1}{2}$ from $w$ by \ref{R3c}, which is a contradiction. If $x$ is not incident with any $5^{-}$-face, then $f$ does not send any charge to $x$. Hence, $\mu'(f) = 10 - 6 - \frac{1}{2} \times 8 = 0$. 

Finally, suppose that $f$ is incident with nine bad vertices. Then $f$ is incident with five $3$-faces, and the $4$-vertex $w$ is incident with one $3$-face and three $10^{+}$-faces. By \ref{R3}, $w$ sends $\frac{1}{2}$ to $f$, contradicting the fact that $f$ receives zero from $w$. 

\begin{case}
$f$ is a $11$-face.
\end{case}

Since $11$ is odd, $f$ is incident with at most ten bad vertices. If $f$ is incident with ten bad vertices, then the remaining vertex is a $4^{+}$-vertex or a $3$-vertex incident with three $10^{+}$-faces, so $\mu^{*}(f) = 11 - 6 - \frac{1}{2} \times 10 = 0$. If $f$ is incident with at most eight bad vertices, then $\mu^{*}(f) \geq 11 - 6 - \frac{1}{2} \times 8 - \frac{1}{3} \times 3 = 0$. In the remaining case, we consider that $f$ is incident with precisely nine bad vertices. Since $9$ is odd, there exists a $4^{+}$-vertex on an adjacent $3$-face. Thus, $\mu^{*}(f) \geq 11 - 6 - \frac{1}{2} \times 10 = 0$. In each subcase, $\mu^{*}(f) \geq 0$, thus $\mu'(f) \geq 0$ by \ref{R5}. 

\begin{case}
$f$ is a $12^{+}$-face.
\end{case}
Recall that $f$ sends at most $\frac{1}{2}$ to each incident vertex. Then $\mu^{*}(f) \geq d(f) - 6 - \frac{1}{2} \times d(f) \geq 0$. Hence, $\mu'(f) \geq 0$ by \ref{R5}.
\end{proof}

\section{Proof of \cref{Liu:1}}
\label{sec:3}
To prove our main result \cref{Liu:1}, we establish a stronger theorem by introducing additional necessary definitions and notations. 

A \emph{cover} of a graph $G$ is a graph $H$ with vertex set $V(H) = \bigcup_{v \in V(G)} X_{v}$, where $X_{v} = \{(v, 1), (v, 2), \dots, (v, s)\}$, and edge set $\mathscr{M} = \bigcup_{uv \in E(G)}\mathscr{M}_{uv}$, where $\mathscr{M}_{uv}$ is a matching between $X_{u}$ and $X_{v}$. Notably, $X_{v}$ is an independent set in $H$, and $\mathscr{M}_{uv}$ may be an empty set. A vertex subset $T \subseteq V(H)$ is a \emph{transversal} of $H$ if $|T \cap X_{v}| = 1$ for each $v \in V(G)$. 

Let $H$ be a cover of $G$ and $f$ be a function mapping $V(H)$ to $\{0, 1, 2, \dots\}$, we refer to the pair $(H, f)$ as a \emph{valued cover} of $G$. For any vertex $x = (v, i) \in V(H)$, we simply write $f(x)$ or $f(v, i)$ interchangeably for $f((v, i))$. Let $S$ be a subset of $V(G)$, we use $H_{S}$ to denote the induced subgraph $H[\bigcup_{v \in S} X_{v}]$. A transversal $T$ is a \emph{strictly $f$-degenerate transversal} if every subgraph $K$ of $H[T]$ has a vertex $x \in K$ with $\deg_{K}(x) < f(x)$.

\begin{theorem}[Lu \etal \cite{MR4357325}]\label{mindeg}
Let $G$ be a graph, and $(H, f)$ be a valued cover of $G$. If $(H, f)$ has no strictly $f$-degenerate transversal, but $(H - H_{x}, f)$ has one, then $\deg_{G}(x) \geq f(v, 1) + \dots + f(v, s)$. 
\end{theorem}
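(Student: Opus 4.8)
The plan is to establish the contrapositive in the following form: assuming $\deg_{G}(x) < f(x,1) + \dots + f(x,s)$ together with the hypothesis that $(H - H_{x}, f)$ has a strictly $f$-degenerate transversal $T'$, I will build a strictly $f$-degenerate transversal $T$ of $(H,f)$ itself, contradicting the assumption that none exists. Since $T'$ already meets every $X_{v}$ with $v \neq x$ in exactly one vertex, the entire construction reduces to choosing a single vertex $(x,i) \in X_{x}$ and setting $T = T' \cup \{(x,i)\}$; the work is in choosing $i$ wisely and then checking that $T$ behaves.

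For each $i \in \{1, \dots, s\}$, let $d_{i}$ be the number of vertices of $T'$ adjacent in $H$ to $(x,i)$. The key counting step is the inequality $\sum_{i=1}^{s} d_{i} \leq \deg_{G}(x)$. Indeed, every edge of $H$ with one end in $X_{x}$ and the other in $T'$ belongs to some matching $\mathscr{M}_{xu}$ with $u \in N_{G}(x)$; its end in $X_{u}$ must be the unique vertex of $T' \cap X_{u}$, and since $\mathscr{M}_{xu}$ is a matching it contributes at most one such edge at that vertex. Hence the number of edges counted by $\sum_{i} d_{i}$ is at most $|N_{G}(x)| = \deg_{G}(x)$. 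Combining this with the standing assumption $\deg_{G}(x) < \sum_{i} f(x,i)$ gives $\sum_{i}(f(x,i) - d_{i}) > 0$, so by the pigeonhole principle there is an index $i$ with $d_{i} < f(x,i)$; fix such an $i$.

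It remains to verify that $T = T' \cup \{(x,i)\}$ is a strictly $f$-degenerate transversal. Take any nonempty subgraph $K$ of $H[T]$. If $(x,i) \notin V(K)$, then $K$ is a subgraph of $H[T']$, and strict $f$-degeneracy of $T'$ furnishes a vertex $y \in V(K)$ with $\deg_{K}(y) < f(y)$. If $(x,i) \in V(K)$, then since every neighbour of $(x,i)$ in $H[T]$ lies in $T'$ (the set $X_{x}$ being independent), we have $\deg_{K}(x,i) \leq d_{i} < f(x,i)$, so $(x,i)$ itself is the required vertex. In either case $K$ has a vertex of $K$-degree below its $f$-value, so $T$ is strictly $f$-degenerate — the desired contradiction.

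I do not expect a genuine obstacle here; the one place that needs care is the counting bound $\sum_{i} d_{i} \leq \deg_{G}(x)$, which rests entirely on correctly unpacking the definition of a cover — namely that a transversal hits each $X_{u}$ exactly once and that each $\mathscr{M}_{xu}$ is a matching rather than an arbitrary bipartite graph. The remaining details (handling an isolated $x$, and the convention that the empty subgraph imposes no constraint) are routine.
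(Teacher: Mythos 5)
The paper does not prove this statement; it is quoted as a known result from Lu~\etal~\cite{MR4357325}. Your argument is correct and is the standard one for results of this type: the counting bound $\sum_i d_i \le \deg_G(x)$ is justified exactly right (one edge per neighbour $u$, since $\mathscr{M}_{xu}$ is a matching and $T'$ meets $X_u$ once), and the verification that adding $(x,i)$ with $d_i < f(x,i)$ preserves strict $f$-degeneracy is complete.
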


Define 
\[
\mathscr{D} \coloneqq \{\,v \mid f(v, 1) + f(v, 2) + \dots + f(v, s) \geq \deg_{G}(v)\,\}.
\]

\begin{theorem}[Lu \etal \cite{MR4357325}]\label{Gallai-SFDT}
Let $G$ be a graph, and $(H, f)$ be a valued cover of $G$. Assume $B$ is a nonempty subset of $\mathscr{D}$ with $G[B]$ having no cut vertex. If $(H, f)$ has no strictly $f$-degenerate transversal, but $(H - H_{B}, f)$ has one, then $G[B]$ is a cycle or a complete graph, or $\deg_{G[B]}(v) \leq \max_{q} \big\{f(v, q)\big\}$ for each $v \in B$. 
\end{theorem}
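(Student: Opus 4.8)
The plan is to assume the conclusion fails and to manufacture a strictly $f$-degenerate transversal of $H$, contradicting the hypothesis. So assume $(H,f)$ has no strictly $f$-degenerate transversal, that $(H-H_B,f)$ has one, and --- against the conclusion --- that $J\coloneqq G[B]$ is neither a cycle nor a complete graph and that some $v_0\in B$ satisfies $\deg_J(v_0)\ge \max_q f(v_0,q)+1$. Fix a strictly $f$-degenerate transversal $T'$ of $H-H_B$ (the empty transversal if $B=V(G)$). The first move is to pass to a residual valued cover of $J$: for $v\in B$ put $f^{*}(v,i)\coloneqq f(v,i)-\bigl|\{\,u\in N_G(v)\setminus B:\ (v,i)\text{ is the }\mathscr{M}_{uv}\text{-partner of the vertex }T'\cap X_u\,\}\bigr|$, and set $X'_v\coloneqq\{(v,i):f^{*}(v,i)\ge 1\}$ and $H'\coloneqq H_B\bigl[\bigcup_{v\in B}X'_v\bigr]$. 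Since each $u\in N_G(v)\setminus B$ lowers exactly one value $f^{*}(v,\cdot)$ by exactly $1$, we obtain $\sum_{(v,i)\in X'_v}f^{*}(v,i)\ge\sum_q f(v,q)-(\deg_G(v)-\deg_J(v))\ge\deg_J(v)$ for every $v\in B$, using $B\subseteq\mathscr{D}$; in particular each $X'_v\neq\emptyset$, and $\max_q f^{*}(v_0,q)\le\max_q f(v_0,q)<\deg_J(v_0)$, so $|X'_{v_0}|\ge 2$. The key reduction is that it now suffices to find a transversal $T_B$ of $H'$ with $H'[T_B]$ strictly $f^{*}$-degenerate: then $T'\cup T_B$ is a transversal of $H$, and deleting its vertices in the order ``first $T_B$ along a witnessing $f^{*}$-degeneracy order, then $T'$ along a witnessing $f$-degeneracy order of $H-H_B$'' is legal, because $f^{*}$ has already absorbed the contribution of the $T'$-neighbours of each vertex of $B$; this contradicts the assumption. (This step is the transversal analogue of the local criticality bound in \cref{mindeg}, now applied to a whole block.)

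\textbf{Structure of $J$ and the ordering.} Since $J$ is $2$-connected and neither complete nor a cycle, the classical block lemma used in proofs of Brooks' theorem yields vertices $u,w,z\in B$ with $uz,wz\in E(J)$, $uw\notin E(J)$, and $J-\{u,w\}$ connected. List $B=\{w_1,\dots,w_k\}$ (so $k=|B|\ge 4$) with $w_1=u$, $w_2=w$, $w_k=z$, and $w_3,\dots,w_{k-1}$ ordered by a reverse breadth-first search from $z$ inside $J-\{u,w\}$, so that every $w_j$ with $3\le j\le k-1$ has a neighbour $w_{j'}$ with $j'>j$.

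\textbf{Greedy choice and the obstacle.} Construct $T_B$ by choosing representatives $x_{w_1},\dots,x_{w_k}$ one at a time in this order, intending to use the reverse order $w_k,\dots,w_1$ as the $f^{*}$-degeneracy order; thus when $w_j$ is processed we must pick some $(w_j,i)\in X'_{w_j}$ whose number of already-chosen neighbours in $H'$ is strictly less than $f^{*}(w_j,i)$. A counting argument makes this automatic for $j\le k-1$: each earlier $J$-neighbour of $w_j$ is matched to at most one vertex of $X'_{w_j}$, so if no admissible choice existed, every earlier neighbour would be matched into $X'_{w_j}$ and the number of earlier $J$-neighbours of $w_j$ would be at least $\sum_{(w_j,i)\in X'_{w_j}}f^{*}(w_j,i)\ge\deg_J(w_j)$, forcing all $J$-neighbours of $w_j$ to lie among $w_1,\dots,w_{j-1}$; this is impossible for $w_1=u$, for $w_2=w$ (whose sole earlier vertex $u$ is a non-neighbour), and for $3\le j\le k-1$ (each has a later neighbour). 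The one delicate vertex is $z=w_k$, all of whose neighbours precede it. Here we imitate Brooks' argument using $uw\notin E(J)$: since $w_1=u$ and $w_2=w$ admit every representative, we may choose $x_u$ and $x_w$ independently and $z$-compatibly --- either both matched to one common vertex of $X'_z$, or neither matched into $X'_z$ --- so that $\{u,w\}$ costs $z$ at most one unit of its budget $\sum_{(z,i)\in X'_z}f^{*}(z,i)\ge\deg_J(z)$, which with the $f^{*}\ge 1$ slack leaves an admissible choice for $z$. I expect the main obstacle to be exactly this last step: in a cover the matchings need not respect any labelling of the fibres, so producing a $z$-compatible pair $x_u,x_w$ (the genuinely DP-coloring phenomenon, absent from ordinary list coloring) requires care, and is where the extra freedom $|X'_{v_0}|\ge 2$ guaranteed by the failure of the degree condition is invoked --- possibly together with a short induction on $|B|$: settle a vertex with surplus budget first and recurse on the blocks of what remains, so that one only ever faces the $z$-step on strict subblocks.
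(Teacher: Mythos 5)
The paper does not prove \cref{Gallai-SFDT}; it is imported verbatim from Lu~\etal \cite{MR4357325}, so your attempt has to stand on its own. Much of your set-up is correct and matches the standard opening of every Brooks/Gallai-type argument in the cover setting: the passage to the residual valued cover $(H',f^{*})$, the inequality $\sum_{x\in X'_v}f^{*}(x)\ge\deg_{G[B]}(v)$ coming from $B\subseteq\mathscr{D}$, the observation that a strictly $f^{*}$-degenerate transversal of $H'$ concatenates with $T'$ to a strictly $f$-degenerate transversal of $H$, and the counting argument showing that every vertex except the last one in a connectivity-respecting order always admits a legal representative. These steps are all sound.

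The gap is exactly where you suspect it, and it is fatal as written. Your plan for the final vertex $z$ is to pick $x_u\in X'_u$ and $x_w\in X'_w$ ``$z$-compatibly,'' i.e.\ either both matched by $\mathscr{M}_{uz}$ and $\mathscr{M}_{wz}$ to a common vertex of $X'_z$, or with at least one of them unmatched into $X'_z$. In a cover this is not always possible: the two matchings are arbitrary, so the images of $X'_u$ and $X'_w$ inside $X'_z$ may be nonempty and disjoint, in which case every choice of the pair blocks two distinct vertices of $X'_z$ and no unit of budget is saved. This is not a technicality but the defining difficulty of the DP/cover setting; it is precisely why the conclusion of \cref{Gallai-SFDT} must admit all cycles as exceptional blocks (in list colouring only odd cycles occur), since on an even cycle the matchings can be twisted so that the classical ``give $u$ and $w$ the same colour'' trick never fires. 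A correct proof must analyse the tight case in which every neighbour of every vertex blocks its residual list perfectly (which forces all $f^{*}$-values to be $1$ and the matchings between residual lists to be essentially perfect) and then exploit the surplus $|X'_{v_0}|\ge 2$ at the vertex violating the degree bound, typically through an induction on $|B|$ that re-decomposes what remains into blocks. Your sketch gestures at this (``possibly together with a short induction'') but does not carry it out, and $v_0$ never actually enters your ordering, so the one hypothesis that distinguishes the hard case is never used. Two smaller points: the structural lemma producing $u,w,z$ with $uw\notin E$ and $G[B]-u-w$ connected is usually proved under a minimum-degree-three assumption, so the degree-two vertices of $G[B]$ need separate justification; and ``no cut vertex'' does not by itself force $G[B]$ to be connected, so the reduction to the $2$-connected case should be stated explicitly.
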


\begin{theorem}[Wang \etal \cite{Wang2019+}]\label{WW}
Let $k$ be an integer with $k \geq 3$, and $K$ be an induced subgraph of $G$ with vertices ordered as $v_{1}, v_{2}, \dots, v_{m}$, satisfying the following conditions:
\begin{enumerate}[label = (\roman*)]
\item\label{WW-1} $k - (d_{G}(v_{1}) - d_{K}(v_{1})) > k - (d_{G}(v_{m}) - d_{K}(v_{m}))$. 
\item\label{WW-2} $d_{G}(v_{m}) \leq k$ and $v_{1}v_{m} \in E(G)$. 
\item\label{WW-3} For $2 \leq i \leq m - 1$, $v_{i}$ has at most $k - 1$ neighbors in $G - \{v_{i+1}, \dots, v_{m}\}$.
\end{enumerate}
Let $H$ be a cover of $G$, and $f$ be a function mapping $V(H)$ to $\{0, 1, 2\}$. If $f(v, 1) + \dots + f(v, s) \geq k$ for each vertex $v \in V(G)$, then any strictly $f$-degenerate transversal of $H - \bigcup_{v \in V(K)}L_{v}$ can be extended to that of $H$. \qed
\end{theorem}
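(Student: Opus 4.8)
The plan is to turn the extension problem into a single greedy sweep along the prescribed order $v_{1},\dots,v_{m}$, using the elimination-order description of strictly $f$-degenerate transversals: a transversal $T$ of a cover is strictly $f$-degenerate if and only if its vertices can be listed so that each of them has fewer than $f$ of its $H[T]$-neighbours later in the list. So fix a strictly $f$-degenerate transversal $T_{0}$ of $H-\bigcup_{v\in V(K)}X_{v}$ together with a witnessing elimination order $\pi_{0}$ of $T_{0}$. It suffices to select transversal vertices $t_{i}\in X_{v_{i}}$, one at a time for $i=1,2,\dots,m$, so that at each step $t_{i}$ has strictly fewer than $f(t_{i})$ neighbours in the already-fixed set $T_{0}\cup\{t_{1},\dots,t_{i-1}\}$: if this can be done, then eliminating $t_{m},t_{m-1},\dots,t_{1}$ in that order and afterwards following $\pi_{0}$ exhibits $H[T_{0}\cup\{t_{1},\dots,t_{m}\}]$ as strictly $f$-degenerate, which is the required extension.

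Next I would dispose of $v_{1},\dots,v_{m-1}$. When $t_{i}$ is to be chosen, each already-fixed neighbour $u$ of $v_{i}$ rules out at most one vertex of $X_{v_{i}}$, namely its $\mathscr{M}_{v_{i}u}$-partner, so the copies of $v_{i}$ collectively receive at most $N_{i}:=|N_{G}(v_{i})\cap(\{v_{1},\dots,v_{i-1}\}\cup(V(G)\setminus V(K)))|$ ``forbidding hits''. If $N_{i}\le k-1$, then since $\sum_{q}f(v_{i},q)\ge k$ some copy $(v_{i},q)$ is hit fewer than $f(v_{i},q)$ times, and I put $t_{i}=(v_{i},q)$. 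Now $N_{i}=|N_{G}(v_{i})\setminus\{v_{i+1},\dots,v_{m}\}|\le k-1$ for $2\le i\le m-1$ by (iii), and $N_{1}=d_{G}(v_{1})-d_{K}(v_{1})<d_{G}(v_{m})-d_{K}(v_{m})\le d_{G}(v_{m})\le k$ by (i)--(ii), so $N_{1}\le k-1$ as well. Hence the sweep never gets stuck before $v_{m}$.

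The one genuine difficulty is $v_{m}$: all its neighbours are already fixed, so $N_{m}=d_{G}(v_{m})$ may be exactly $k$, and the crude count ($\le k$ hits versus $\sum_{q}f(v_{m},q)\ge k$) no longer guarantees a usable copy. This is where (i) and (ii) have to be cashed in, through the edge $v_{1}v_{m}$. If $d_{G}(v_{m})\le k-1$ nothing is needed, so assume $d_{G}(v_{m})=k$. Because $v_{1}\in N(v_{m})\cap V(K)$ we have $d_{K}(v_{m})\ge1$, hence $d_{G}(v_{m})-d_{K}(v_{m})\le k-1$, and then (i) gives $N_{1}\le k-2$: there was slack at least $2$ when $t_{1}$ was chosen. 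Simple bookkeeping with this slack yields either (a) at least two admissible copies of $v_{1}$, or (b) a single admissible copy $(v_{1},q^{\ast})$, which forces the slack to be exactly $2$, hence $d_{G}(v_{m})-d_{K}(v_{m})=k-1$ and so $d_{K}(v_{m})=1$. In case (a): when $v_{m}$'s turn comes, its $k-1$ fixed neighbours other than $v_{1}$ contribute at most $k-1$ hits, so some copy $(v_{m},r^{\ast})$ is hit fewer than $f(v_{m},r^{\ast})$ times by them; since the two admissible copies of $v_{1}$ have distinct $\mathscr{M}_{v_{1}v_{m}}$-partners (or none at all), I take $t_{1}$ to be an admissible copy of $v_{1}$ whose partner is not $(v_{m},r^{\ast})$, so that $v_{1}$ adds no hit at $r^{\ast}$ and $t_{m}=(v_{m},r^{\ast})$ is a legal final choice. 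In case (b): $v_{1}$ is $v_{m}$'s only neighbour in $K$, so no $v_{i}$ with $2\le i\le m-1$ is adjacent to $v_{m}$; I then rerun the entire greedy in the order $v_{m},v_{1},v_{2},\dots,v_{m-1}$, for which $v_{m}$ now sees only its $d_{G}(v_{m})-1\le k-1$ external neighbours, $v_{1}$ sees $v_{m}$ plus its $\le k-2$ external neighbours, and every middle $v_{i}$ still sees at most $k-1$ fixed neighbours, so the routine argument of the second paragraph applies throughout.

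I expect the treatment of $v_{m}$ when $d_{G}(v_{m})=k$ to be the main obstacle: everywhere else the ``list total $\ge k$ beats $\le k-1$ constraints'' counting is automatic, whereas here one must use the strictness in (i) to bank a spare choice at $v_{1}$ and then clear the leftover extremal configuration (namely $v_{m}$ pendant to $v_{1}$ inside $K$) by re-ordering. The remainder is routine cover-and-matching bookkeeping.
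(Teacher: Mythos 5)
The paper imports this theorem from \cite{Wang2019+} and states it with a \qed, so there is no in-paper proof to compare against; judged on its own merits, your argument has a genuine gap in Case (a). Your framework is sound: the elimination-order characterization of strictly $f$-degenerate transversals, the reduction to choosing $t_i \in X_{v_i}$ with fewer than $f(t_i)$ neighbours among the previously fixed vertices, and the counts $N_1 \le k-2$ (for $2 \le i \le m-1$, $N_i \le k-1$) are all correct. Case (b) also goes through, since a unique admissible copy of $v_1$ carries positive residual at most $f \le 2$ while the total residual is at least $k - N_1 \ge d_K(v_m)+1$, forcing $d_K(v_m)=1$ and making your reordering $v_m, v_1, \dots, v_{m-1}$ legitimate. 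The problem is the circularity in Case (a): the copy $(v_m, r^{\ast})$ is determined by the hits coming from $t_2,\dots,t_{m-1}$, but those vertices are chosen subject to constraints involving $t_1$ whenever some $v_i$ with $2 \le i \le m-1$ is adjacent to $v_1$. You cannot first observe $r^{\ast}$ and then retroactively reset $t_1$: changing $t_1$ can invalidate the already-chosen $t_i$'s, and re-running the greedy with the new $t_1$ changes the hits on $X_{v_m}$ and hence $r^{\ast}$ itself.

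This is not merely presentational. Take $k=3$, $s=2$, $f(v,1)=2$ and $f(v,2)=1$ for every $v$, let $K$ be a triangle $v_1v_2v_3$ with a single external pendant neighbour $u$ of $v_3$, let every matching be $\{(v,1)(w,1),(v,2)(w,2)\}$, and let $T_0=\{(u,1)\}$. All hypotheses hold and you are in Case (a). If the greedy picks $t_1=(v_1,1)$ and then $t_2=(v_2,2)$ (both legal), the neighbours of $v_3$ other than $v_1$ hit $X_{v_3}$ with multiplicities $(1,1)$, so $r^{\ast}=1$ and your rule demands $t_1=(v_1,2)$; but $(v_1,2)(v_2,2)\in E(H)$ and $f(v_2,2)=1$, so this retroactive switch kills $t_2$, and the forced replacement $t_2=(v_2,1)$ together with $t_1=(v_1,2)$ saturates both copies of $v_3$ exactly, leaving no legal $t_3$. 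An extension does exist ($t_1=(v_1,1)$, $t_2=(v_2,1)$, $t_3=(v_3,2)$), but only because $t_2$ is steered onto the copy of $X_{v_3}$ that is already over-saturated --- a coordination between the middle vertices and $X_{v_m}$ that your procedure never enforces. Closing the gap needs a genuinely different mechanism for the last step, e.g.\ propagating a reserved copy of $X_{v_m}$ backwards through the order, or an exchange argument showing that some run of the greedy succeeds; the two-way choice at $v_1$ alone does not suffice.
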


Let $G$ be a graph, and let $H$ be a cover of $G$. We say that $H$ is a \emph{canonical cover} of $G$ if the cover has the property that $(u, i)(v, j) \in E(H)$ if and only if $uv \in E(G)$ and $i = j$. Observe that a canonical cover of $G$ is isomorphic to $s$ copies of $G$.

\begin{theorem}\label{SFDT}
Let $G$ be a plane graph without $4$-, $7$-, $9$-cycles and $5$-cycles normally adjacent to $3$-cycles. Assume $H$ is a canonical cover of $G$ with $s = 2$, and $f$ is a special function with $f(v, 1) = 1$ and $f(v, 2) = 2$ for every vertex $v \in V(G)$. Then $H$ has a strictly $f$-degenerate transversal. 
\end{theorem}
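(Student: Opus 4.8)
The plan is to argue by contradiction. Suppose $(H, f)$ is a counterexample in which $|V(G)|$ is minimum. Since $f(v,1) + f(v,2) = 3$ for every vertex $v$, the set $\mathscr{D}$ defined above consists exactly of the vertices of $G$ of degree at most $3$. I would first show $\delta(G) \ge 3$: if some $x$ satisfied $\deg_G(x) \le 2 < 3 = f(x,1) + f(x,2)$, then by minimality $(H - H_x, f)$ would admit a strictly $f$-degenerate transversal, and \cref{mindeg} would then force $\deg_G(x) \ge 3$, a contradiction. With $\delta(G) \ge 3$ in hand, \cref{Local} supplies a subgraph of $G$ isomorphic to one of the three configurations in \cref{PP}, so it remains to prove that each of these is reducible, i.e.\ cannot appear in the minimal counterexample.

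For configurations \cref{10Cap3Fig} and \cref{8Cap5Fig} I would apply \cref{Gallai-SFDT}. In both, every distinguished vertex is a $3$-vertex, so the vertex set $B$ of the configuration lies in $\mathscr{D}$. Moreover $G[B]$ has no cut vertex --- a $10$-cycle together with the apex of the attached $3$-face in \cref{10Cap3Fig}, and an $8$-cycle and a $5$-cycle sharing a single edge in \cref{8Cap5Fig} --- it is neither a cycle nor a complete graph, and it contains a vertex whose degree in $G[B]$ equals $3 > 2 = \max_q f(\cdot, q)$. By minimality $(H - H_B, f)$ admits a strictly $f$-degenerate transversal while $(H, f)$ does not, so the hypotheses of \cref{Gallai-SFDT} are met and $G[B]$ would have to be a cycle, a complete graph, or satisfy $\deg_{G[B]}(v) \le 2$ for every $v$; since none of these holds, this is the desired contradiction. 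One should still check that extra edges or vertex identifications permitted by $G$ cannot destroy $2$-connectivity or remove the degree-$3$ vertex, but the absence of $4$-, $7$-, $9$-cycles keeps this under control.

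Configuration \cref{BAD10FACE} is where I expect the real difficulty, since it contains the two $4$-vertices $u_2$ and $u_5$, which lie outside $\mathscr{D}$ and defeat the clean argument above; here I would instead invoke \cref{WW} with $k = 3$ (legitimate because $f(v,1) + f(v,2) = 3$ and $f$ takes values in $\{0,1,2\}$). Take $K$ to be the induced subgraph on all configuration vertices and order them so that condition~\ref{WW-2} holds with a $3$-vertex as the last vertex adjacent to the first, and condition~\ref{WW-1} holds by choosing the first vertex to have all its neighbours inside $K$ (for example $v_{10}$, whose neighbours $v_9$, $v_1$, $u_2$ all appear), so that its external degree is strictly smaller than that of the last vertex. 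The remaining vertices must then be threaded along the two $10$-cycles so that every internal vertex still retains a later neighbour in $K$, which is exactly condition~\ref{WW-3}; this is delicate precisely at $u_2$ and $u_5$, each of which must keep two later neighbours. Once such an ordering is verified, the strictly $f$-degenerate transversal of $H - H_{V(K)}$ guaranteed by minimality extends to all of $H$ by \cref{WW}, giving the final contradiction. The crux of the whole proof is therefore this bookkeeping for \cref{BAD10FACE}: producing an explicit vertex ordering that simultaneously meets the external-degree inequality and the ``at most $k-1$ earlier neighbours'' requirement at every internal vertex, while ruling out coincidences among the $v_i$ and $u_j$ that the cycle restrictions forbid. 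By comparison, the arguments for \cref{10Cap3Fig} and \cref{8Cap5Fig} are routine applications of \cref{Gallai-SFDT}.
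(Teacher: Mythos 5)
Your overall architecture is exactly the paper's: minimality together with \cref{mindeg} gives $\delta(G)\geq 3$, \cref{Local} then produces a configuration, \cref{Gallai-SFDT} disposes of \cref{10Cap3Fig} and \cref{8Cap5Fig}, and \cref{WW} with $k=3$ is invoked for \cref{BAD10FACE}. The first three of these steps are correct and coincide with the paper's treatment. The gap is the one you yourself flag: for \cref{BAD10FACE} you never exhibit the ordering, and the difficulty you locate at the two $4$-vertices is not mere bookkeeping but an obstruction to the plan as you describe it. Condition \ref{WW-3} with $k=3$ demands that every internal vertex $v_i$ keep at least $d_G(v_i)-2$ neighbours \emph{later} in the ordering, so each $3$-vertex of the right decagon needs one later neighbour and each of $u_2,u_5$ needs two; moreover, every neighbour of a right-decagon vertex other than its two decagon neighbours and (for $u_2,u_5$) the single cross edge lies outside the configuration and hence can never be ``later''. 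Any linear order of the ten right-decagon vertices leaves a last one, all of whose decagon neighbours are earlier: if that vertex is one of the eight $3$-vertices it has no later neighbour at all, and if it is $u_2$ or $u_5$ it has at most one (the cross edge) where two are required. The only escape is to make that vertex $v_1$ or $v_m$, which conditions \ref{WW-1} and \ref{WW-2} forbid: $v_m$ must be a $3$-vertex adjacent to $v_1$, and \ref{WW-1} forces $d_G(v_1)=d_K(v_1)$, which singles out the two left-decagon vertices $v_{7}$ and $v_{10}$ --- exactly your choice of starting vertex, and exactly the choice that leaves the right decagon stranded. So ``threading the vertices along the two $10$-cycles'' cannot be completed as you have set it up.

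For comparison, the paper orders $K$ (on the eighteen labelled vertices of \cref{CF}) as $x_3, y_3, y_4, \dots, y_{10}, x_2, x_1, x_{10}, \dots, x_4$ and asserts that the hypotheses of \cref{WW} ``can be checked''; you should carry out that check yourself at $y_3$ and $y_{10}$, each of which is a $4$-vertex with only one neighbour appearing later in that list while two of its four neighbours lie outside $K$. Whatever the resolution there, your proposal as written does not establish the reducibility of \cref{BAD10FACE}, and that is the step on which the whole theorem turns; the rest of your argument is sound and matches the paper.
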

\begin{proof}
Let $G$ be a counterexample with the minimum number of vertices. In other words, $(H, f)$ has no strictly $f$-degenerate transversal, but $(H_{S}, f)$ has one for every proper subset $S \subset V(G)$. According to \cref{Local}, $G$ has a vertex of degree at most $2$, or there is a subgraph isomorphic to one of the configurations in \cref{PP}. By \cref{mindeg}, the minimum degree of $G$ is at least $3$. 

Suppose there exists a subgraph isomorphic to a configuration in \cref{10Cap3Fig} or \cref{8Cap5Fig}. Let $U$ be the vertex set of the subgraph. Note that $G[U]$ is neither a cycle nor a complete graph. Moreover, there is a vertex $v$ in $G[U]$ with degree greater than $2$. This contradicts \cref{Gallai-SFDT}. 

Suppose there is a subgraph $K$ isomorphic to the configuration in \cref{BAD10FACE}. We use the labels as in \cref{CF}. Order the vertices of $K$ as
\[
x_{3}, y_{3}, y_{4}, y_{5}, y_{6}, y_{7}, y_{8}, y_{9}, y_{10}, x_{2}, x_{1}, x_{10}, x_{9}, x_{8}, x_{7}, x_{6}, x_{5}, x_{4}.
\]
We can check that the list above satisfies the conditions in \cref{WW}.
By minimality, $(H - H_K, f)$ has a strictly $f$-degenerate transversal $T$. According to \cref{WW}, we can obtain a strictly $f$-degenerate transversal of $H$ by extending $T$, which leads to a contradiction.
\end{proof}

\begin{proof}[Proof of \cref{Liu:1}]
Let $s = 2$, and $H$ be a canonical cover of $G$. Let $f$ be a mapping with $f(v, 1) = 1$ and $f(v, 2) = 2$ for every vertex $v \in V(G)$. By \cref{SFDT}, $H$ has a strictly $f$-degenerate transversal $T$. Let $\mathcal{I} = \{v \mid (v, 1) \in T\}$ and $\mathcal{F} = \{v \mid (v, 2) \in T\}$. Observe that $\mathcal{I}$ is an independent set in $G$, and $\mathcal{F}$ induces a forest in $G$. Then $G$ is $(\mathcal{I}, \mathcal{F})$-partitionable, this completes the proof of \cref{Liu:1}. 
\end{proof}

\section{Proof of \cref{WD-MAIN}}
\label{sec:4}
We say that $G$ is a \emph{minimal} graph of weak degeneracy $d$ if $\mathsf{wd}(G) = d$ and $\mathsf{wd}(H) < d$ for every proper subgraph $H$ of $G$. A connected graph is a \emph{GDP-tree} if every block is either a cycle or a complete graph. We need the following Gallai-type result established by Bernshteyn and Lee~\cite{MR4606413}.

\begin{theorem}[Bernshteyn and Lee~\cite{MR4606413}]\label{Gallai}
Let $G$ be a minimal graph with weak degeneracy $d \geq 3$. Then the following statements hold.
\begin{enumerate}[label = (\roman*)]
\item\label{G1} The minimum degree of $G$ is at least $d$. 
\item\label{G2} Let $U \subseteq \{u \in V(G) \mid d_{G}(u) = d\}$. Then every component of $G[U]$ is a GDP-tree.
\end{enumerate}
\end{theorem}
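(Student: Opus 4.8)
\medskip
The plan is to argue from minimality. Since $\mathsf{wd}(G) = d$ while $\mathsf{wd}(H) \le d-1$ for every proper subgraph $H$, it suffices in each part to assume the forbidden situation and then empty $G$ by a legal sequence of \textsf{Delete} and \textsf{DeleteSave} operations starting from the constant function $d-1$; this yields $\mathsf{wd}(G) \le d-1$, a contradiction. The basic device is a \emph{replay} argument. Fix $K \subseteq V(G)$ and suppose the graph $G - K$ on $V(G)\setminus K$ is weakly $(d-1)$-degenerate, witnessed by a legal sequence $\sigma$ from the constant function $d-1$. Running $\sigma$ verbatim inside $G$ (so that vertices of $K$ are never chosen for deletion, and the saved vertex of every \textsf{DeleteSave} is the one prescribed by $\sigma$, which lies in $V(G)\setminus K$) keeps the values on $V(G)\setminus K$ identical to those produced by $\sigma$: the only extra effect of the vertices of $K$ being present is additional decrements on $K$ itself, which never feed back into $V(G)\setminus K$. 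The value conditions of \textsf{DeleteSave} involve only $V(G)\setminus K$ and are therefore inherited from $\sigma$, so at each step the only thing to check for legality is nonnegativity on $K$; moreover each $w \in K$ is decremented exactly once for every neighbour it has in $V(G)\setminus K$ (it is never saved). After the replay, $V(G)\setminus K$ has been removed and each $w \in K$ carries value $(d-1) - |N_G(w)\setminus K|$.

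For \cref{Gallai}\ref{G1}, suppose for contradiction that $d_G(v) \le d-1$ for some $v$, and apply the replay with $K = \{v\}$, using that $G-v$ is weakly $(d-1)$-degenerate by minimality. Since $|N_G(v)| \le d-1$, the value of $v$ never drops below $(d-1)-(d-1)=0$, and at the end $v$ is isolated with nonnegative value, so one further \textsf{Delete} removes it. Hence $\mathsf{wd}(G)\le d-1$, a contradiction, so $\delta(G)\ge d$; in particular every vertex of degree $d$ is a vertex of minimum degree, which is the setting of \ref{G2}.

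For \cref{Gallai}\ref{G2}, suppose some component of $G[U]$ has a block $B$ that is neither a cycle nor a complete graph; such a block is $2$-connected with at least four vertices. Because $G[U]$ is an induced subgraph of $G$, every $w \in V(B)$ satisfies $d_G(w) = d$ and $|N_G(w)\cap V(B)| = d_B(w)$, so $|N_G(w)\setminus V(B)| = d - d_B(w)$. Apply the replay with $K = V(B)$: by minimality $G - V(B)$ is weakly $(d-1)$-degenerate, and after the replay only $V(B)$ survives, each $w$ carrying value $(d-1) - (d - d_B(w)) = d_B(w) - 1 \ge 0$, with legality holding throughout since the running value of $w$ never drops below $d_B(w)-1$. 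Thus the whole argument reduces to the following purely local claim, whose verification is the crux of the proof: \emph{every $2$-connected graph $B$ that is neither a cycle nor a complete graph is weakly $g$-degenerate for $g(w) = d_B(w)-1$.} Granting this, $B$ can also be emptied and $\mathsf{wd}(G)\le d-1$, the desired contradiction. (If $B = V(G)$ the replay is vacuous and one applies the local claim to $G$ itself, which is then $d$-regular.)

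I expect this local claim --- an analogue for weak degeneracy of the Erd\H{o}s--Rubin--Taylor characterization of degree-choosable graphs --- to be the main obstacle, and I would prove it by induction on $|V(B)|$, strengthened to: \emph{any connected graph $B$ with $g(w)\ge d_B(w)-1$ for all $w$ is weakly $g$-degenerate, unless $B$ is a \textup{GDP}-tree with $g(w) = d_B(w)-1$ for every $w$.} The engine is that $\textsf{DeleteSave}_{[u;w]}$ is legal precisely when $g(u) > g(w)$, so a difference of values (equivalently, when $g = d_B-1$, a difference of degrees) across an edge lets one remove $u$ while preserving the budget of its neighbour $w$; this is exactly the resource that cycles and complete graphs lack, since there $g$ is constant and any attempt degrades into a stuck edge carrying values $(0,0)$. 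For the inductive step I would split into two cases: if $B$ has two adjacent vertices of distinct degrees, a single \textsf{DeleteSave} from the larger to the smaller leaves a connected graph in which the saved vertex has strictly positive slack, so the exceptional case of the hypothesis cannot occur and induction applies; if $B$ is regular (hence of degree at least $3$, being neither a cycle nor complete), one ordinary \textsf{Delete} of a vertex leaves a connected non-regular graph, to which induction should apply. The delicate points --- keeping the residual graph connected and ruling out the possibility that a deletion produces a \textup{GDP}-tree with no slack --- are where I would invoke the structural fact that a $2$-connected graph which is neither complete nor a cycle contains two non-adjacent vertices whose removal leaves a connected graph, and choose the first operations to respect such a pair.
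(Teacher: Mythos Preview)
The paper does not prove this theorem: it is quoted from Bernshteyn and Lee~\cite{MR4606413} and used as a black box in the proof of \cref{WD-MAIN}, so there is no in-paper argument against which to compare yours.

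Your plan is nonetheless the natural one and mirrors the approach in the cited source. The replay argument for part~\ref{G1} and for reducing part~\ref{G2} to the local claim is correct as written. The local claim --- that a connected graph $B$ with $g \geq d_B - 1$ is weakly $g$-degenerate unless $B$ is a GDP-tree with equality everywhere --- is the substantive content, and you give only a sketch. Two remarks. First, when some vertex carries strict slack, no \textsf{DeleteSave} is needed: deleting along a spanning-tree order terminating at that vertex already works. Second, you rightly flag the tight case as delicate: a plain \textsf{Delete} in a regular graph leaves the residual function again identically $d-1$, and a \textsf{DeleteSave} across an edge of unequal degrees may disconnect the graph (if the deleted endpoint is a cut vertex) or strand a slackless GDP-tree component on the far side. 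The Brooks-type structural fact you cite (existence, in a $2$-connected graph that is neither a cycle nor complete, of two non-adjacent vertices with a common neighbour whose removal keeps the graph connected) is exactly the right ingredient, but the passage from that fact to an explicit legal removal sequence still has to be written out; as it stands, your treatment of the local claim is a sound plan rather than a completed proof.
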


Now, we can easily prove \cref{WD-MAIN}. 
\begin{proof}[Proof of \cref{WD-MAIN}]
Let $G$ be a counterexample to \cref{WD-MAIN} such that every proper subgraph $H$ of $G$ has $\mathsf{wd}(H) \leq 2$. Observe that $G$ is a minimal graph with weak degeneracy $3$. By \cref{Gallai}\ref{G1}, the minimum degree of $G$ is at least three. By \cref{Gallai}\ref{G2}, there are no subgraphs isomorphic to the configurations depicted in \cref{10Cap3Fig} or \cref{8Cap5Fig}. 

\begin{figure}
\centering
\begin{tikzpicture}[line width = 1pt]
\def\s{1}
\foreach \ang in {1, 2, 3, 4, 5, 6, 7, 8, 9, 10}
{
\def\pointname{v\ang}
\coordinate (\pointname) at ($(\ang*360/10+54:\s)$);
}
\draw (v1)node[above]{\small$x_{4}$}--(v2)node[above]{\small$x_{5}$}--(v3)node[left]{\small$x_{6}$}--(v4)node[left]{\small$x_{7}$}--(v5)node[below]{\small$x_{8}$}--(v6)node[below]{\small$x_{9}$}--(v7)node[below]{\small$x_{10}$}--(v8)node[left]{\small$x_{1}$}--(v9)node[left]{\small$x_{2}$}--(v10)node[above]{\small$x_{3}$}--cycle;
\foreach \ang in {1, 2, 3, 4, 5, 6, 7, 8, 9, 10}
{
\node[circle, inner sep = 1, fill, draw] () at (v\ang) {};
}

\foreach \ang in {1, 2, 3, 4, 5, 6, 7, 8, 9, 10}
{
\def\pointname{u\ang}
\coordinate (\pointname) at ($(\ang*360/10+54:\s) + (1.9*\s, 0)$);
}
\draw (u1)node[above]{\small$y_{4}$}--(u2)node[above]{\small$y_{3}$}--(u3)--(u4)--(u5)node[below]{\small$y_{10}$}--(u6)node[below]{\small$y_{9}$}--(u7)node[below]{\small$y_{8}$}--(u8)node[right]{\small$y_{7}$}--(u9)node[right]{\small$y_{6}$}--(u10)node[above]{\small$y_{5}$}--cycle;
\foreach \ang in {1, 3, 4, 6, 7, 8, 9, 10}
{
\node[circle, inner sep = 1, fill, draw] () at (u\ang) {};
}
\draw (v10)--(u2);
\draw (v7)--(u5);
\node[rectangle, inner sep = 2, fill, draw] () at (u2) {};
\node[rectangle, inner sep = 2, fill, draw] () at (u5) {};
\end{tikzpicture}
\caption{A configuration with labels.}
\label{CF}
\end{figure}
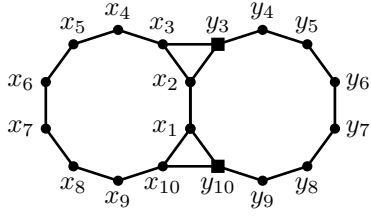

Let $W$ be the set of vertices represented in \cref{CF}. Since $G$ contains no $4$-, $7$-, $9$-cycles, and no $5$-cycles normally adjacent to $3$-cycles, $x_{4}$ has only two neighbors $x_{3}$ and $x_{5}$ in $G[W]$. By minimality, we can first remove all vertices from $G-W$ through a sequence of legal applications of \textsf{Delete} or \textsf{DeleteSave} operations. Let $f'$ be the resulting function on $W$. Note that $f'(x_{4}) = 2$ and $f'(x_{3}) = 3$. Next, we apply a legal application of $\mathsf{DeleteSave}_{[x_{3};x_{4}]}(G[W], f')$ operation, and then we remove all the remaining vertices with the following order and \textsf{Delete} operations: 
\[
y_{3}, y_{4}, y_{5}, y_{6}, y_{7}, y_{8}, y_{9}, y_{10}, x_{2}, x_{1}, x_{10}, x_{9}, x_{8}, x_{7}, x_{6}, x_{5}, x_{4}.
\]
Hence, $G$ is weakly $2$-degenerate, leading to a contradiction. Therefore, there are no subgraphs isomorphic to the configuration in \cref{BAD10FACE} in $G$. However, this contradicts \cref{Local}. 
\end{proof}

\vskip 0mm \vspace{0.3cm} \noindent{\bf Acknowledgments.} We thank the two anonymous referees for their valuable comments and constructive suggestions on the manuscript. The second author was supported by the Natural Science Foundation of Henan Province (No. 242300420238). The third author was supported by National Natural Science Foundation of China (No. 12101187).

\end{document}